\documentclass[8pt,a4paper]{article}

\usepackage[utf8]{inputenc}
\usepackage[english]{babel}

\usepackage{amsmath}
\usepackage{amssymb}
\usepackage{mathtools}
\usepackage{amsthm}

\usepackage[shortlabels]{enumitem}
\usepackage[title]{appendix}
\usepackage{optidef}

\usepackage{algorithm}
\usepackage{algpseudocode}

\usepackage{bbm}
\usepackage[numbers,sort&compress]{natbib}

\usepackage[pdftex]{graphicx}
\usepackage{scalerel}
\usepackage{comment}
\usepackage{setspace}
\usepackage{indentfirst}
\usepackage{makeidx}
\usepackage[nottoc]{tocbibind}
\usepackage{courier}
\usepackage{listings}
\usepackage{titletoc}

\usepackage[font=small,format=plain,labelfont=bf,textfont=it]{caption}
\usepackage[usenames,svgnames,dvipsnames]{xcolor}

\usepackage[a4paper,top=2.54cm,bottom=2.0cm,left=2.0cm,right=2.54cm]{geometry}

\usepackage{array}
\usepackage{tikz}
\usetikzlibrary{shapes,shapes.geometric,arrows,positioning}

\usepackage{emptypage}
\usepackage{doi}
\usepackage[all]{hypcap}

\usepackage{emptypage}  
\fontsize{60}{62}\usefont{OT1}{cmr}{m}{n}{\selectfont}
\definecolor{Floresta}{rgb}{0.13,0.54,0.13}
\definecolor{Rosadif}{rgb}{0.6196,0.1254,0.6980} 
\definecolor{Rosa}{rgb}{0.8705, 0.0862, 0.5058}
\definecolor{Verdeest}{rgb}{0.0549, 0.7529, 0.1450} 
\definecolor{Verde}{rgb}{0.1254,0.70196,0.1254} 
\definecolor{OrangeRed}{rgb}{0.9294,0.0509,0.3529} 
\definecolor{Laranja}{rgb}{1,0.4352941176, 0}  
\definecolor{Amarelo}{rgb}{0.9215, 0.8196, 0.18}

\newtheorem{theorem}{Theorem}[section]
\newtheorem{example}{Example}[section]

\newtheorem{lemma}{Lemma}[section]
\newtheorem{proposition}{Proposition}[section]
\newtheorem{assumption}{Assumption}[section]
\newtheorem{definition}{Definition}[section]
\newtheorem{remark}{Remark}

\usepackage{tikz}
\usetikzlibrary{shapes, shapes.geometric, arrows, positioning}
\tikzstyle{nodo} = [rectangle, rounded corners, minimum width=1cm, minimum height=0.5cm,text centered, draw=black, fill=green!30]
\tikzstyle{arrow} = [thick,->,>=stealth]

\everymath{\displaystyle}

\title{Augmented Lagrangian methods for nonlinear semidefinite programming with complementarity constraints}
\author{Daiana O. Santos\thanks{Department of Actuarial Science, Paulista School of Politics, Economics and Business, Federal University of São Paulo, Osasco-SP, Brazil.}
 \and Carina M. Costa\thanks{Department of Mathematics, State University of Maringá, Maringá-PR, Brazil.} 
  \and Evelin H. M. Krulikovski\thanks{Department of Mathematics, Federal University of Paraná, Curitiba-PR, Brazil.} 
  \and Marina Geremia\thanks{Câmpus Chapecó, Instituto Federal de Santa Catarina (IFSC), Chapecó, Brazil.}    
}

\date{\today}
\begin{document}

\maketitle

\begin{abstract}We consider nonlinear semidefinite programming problems with complementarity constraints (SDCMPCC), a class of highly degenerate problems where classical optimality conditions typically fail. In  this context, weak stationarity conditions have been developed to address their degeneracy. While these notions are well understood, their algorithmic implications remain largely unexplored in semidefinite complementarity settings. We introduce a reformulation based on the spectral decomposition of the complementarity structure, which preserves local solutions and enables a tractable analysis.
Within this framework, we analyze an augmented Lagrangian method for SDCMPCC and prove that under a suitable extension of Robinson’s condition, every accumulation point of the generated sequence is W-stationary, or C-stationary under a stricter condition for solving the subproblems.
\end{abstract}

\noindent {\bf Keywords:} 
Nonlinear Semidefinite Programming,
Complementarity Constraints,
Optimality Conditions
{\small
	\\[2mm]
	\hskip 0.2in{\bf  AMS Classification}: 
      90C22, 90C30, 90C33, 90C46}

\section{Introduction}
\begin{sloppypar}
Mathematical programs with complementarity constraints (MPCC) arise naturally in engineering design, economics, and equilibrium modeling. These problems are notoriously difficult because standard \emph{constraint qualifications} (CQ) are systematically violated, which invalidates classical Karush--Kuhn--Tucker (KKT) theory and present significant challenges to the development of reliable algorithms; see, e.g.,~\cite{dempe2003annotated,ferris1997engineering, flegel2005guignard,  IzmailovSolodovUskov2012, luo1996}. A central advance in overcoming these difficulties has been the replacement of traditional KKT conditions by stationarity hierarchies, including weak, Clarke, Mordukhovich, and strong stationarity, as well as by sequential notions of optimality, such as the Approximate KKT (AKKT) condition~\cite{akkt}. These latter provide necessary conditions compatible with the stopping criteria of modern algorithms and remain valid even when constraint qualifications fail.
\end{sloppypar}

Sequential optimality conditions have attracted considerable attention in nonlinear optimization because of their deep connection to the behavior of iterative methods. In contrast to classical KKT-type conditions, which often require constraint qualifications, sequential conditions such as AKKT are genuine necessary conditions satisfied by every local minimizer of a smooth problem. The fact that augmented Lagrangian methods, interior-point techniques, and sequential quadratic programming converge to AKKT points has enabled the analysis of convergence without assuming strong regularity notions. This, in turn, has motivated the extension of sequential optimality concepts to a wide range of settings, including multiobjective optimization~\cite{p1,p2}, nonlinear semidefinite programming~\cite{daiana}, nonlinear complementarity problems~\cite{leo, alberto}, generalized Nash equilibrium problems~\cite{frank}, optimization of discontinuous functions~\cite{birgin}, Banach space optimization~\cite{kanzow}, and variational inequalities~\cite{frank2,variacional,kanzow2}.

Despite these advances, the integration of sequential optimality conditions with semidefinite complementarity structures has not yet been systematically studied. Semidefinite cone mathematical programming with complementarity constraints problems, hereafter referred to as SDCMPCC, extend classical MPCC to settings where variables are symmetric matrices and feasibility is governed by the semidefinite order. This framework leads to additional degeneracy phenomena and increased analytical challenges, since not only do standard constraint qualifications fail, but the complementarity structure itself is embedded in a nonlinear spectral geometry. Earlier works, such as those by Yan and Fukushima~\cite{yan2011smoothing} and Ding, Sun, and Ye~\cite{dsun}, established the first theoretical foundations for stationarity concepts in SDCMPCC and motivated subsequent investigations into their structure and applications, including rank-constrained correlation matrix problems, bilinear matrix inequalities, and equilibrium models with uncertain data. More recently, Madariaga and Ramírez~\cite{hector} introduced a facial characterization of stationarity, emphasizing the structural complexity of semidefinite complementarity.

\textbf{Main contributions of this work:}
\begin{enumerate}
    \item We introduce a tightened reformulation of nonlinear semidefinite programs with complementarity constraints based on the spectral decomposition of the complementarity structure. This reformulation separates active, inactive, and biactive spectral components, preserves local solutions, and enables a tractable first-order optimality analysis.
   \item We introduce the notions of W- and C-stationarity tailored to SDCMPCC, extending classical MPCC concepts to the semidefinite setting.
\item We develop their sequential counterparts, AW- and AC-stationarity, and prove that they constitute genuine necessary optimality conditions aligned with the behavior of modern algorithms.
    \item We analyze an augmented Lagrangian method for SDCMPCC and prove that every accumulation point of the generated sequence is AW-stationary. Moreover, under a mild additional assumption, it is also AC-stationary, and, under the SDCMPCC-Robinson constraint qualification, it satisfies C-stationarity.
\end{enumerate}
\subsection{Structure of the Paper}
The paper is organized to progressively develop both the theoretical and algorithmic aspects of nonlinear semidefinite programming with complementarity constraints.
Subsection~\ref{notation} introduces the notation that will be used throughout the paper.
In Section~\ref{sessao2}, we review first-order optimality conditions for nonlinear semidefinite programming (NLSDP), with particular emphasis on sequential optimality concepts.
Section~\ref{sessao3} presents the formulation of nonlinear semidefinite programs with complementarity constraints (SDCMPCC) and discusses the intrinsic degeneracy of these problems, highlighting the systematic failure of classical constraint qualifications and motivating the need for alternative stationarity concepts.
In Section~\ref{sessao4}, we introduce a tightened reformulation based on the spectral decomposition of the complementarity structure. 
Section~\ref{sessao5} is devoted to the development of stationarity notions for SDCMPCC. 
We introduce the concepts of W- and C-stationarity, together with their 
sequential counterparts, namely AW- and AC-stationarity.
In Section~\ref{algorithm}, we investigate the algorithmic consequences of the proposed framework. 
Finally, Section~\ref{comentarios} concludes the paper.
\subsection{Notation}\label{notation}
Throughout the paper, all matrices are real.  Let $f: \mathbb{R}^n \to \mathbb{R}$ be $C^1$, and $\nabla f(x)$ denotes its gradient at $x$. We denote by $\mathbb{S}^m$ the space of $m \times m$ real symmetric matrices and by $\mathbb{O}^m$ the set of orthogonal matrices. 
For $X \in \mathbb{S}^m$, $X \succeq 0$ (resp.\ $X \preceq 0$) denotes that $X$ is positive semidefinite (resp.\ negative semidefinite), while $X \succ 0$ (resp.\ $X \prec 0$) denotes that $X$ is positive definite (resp.\ negative definite). Accordingly, $\mathbb{S}_+^m$ and $\mathbb{S}_-^m$ denote the cones of positive and negative semidefinite matrices.
We denote by $\mathrm{tr}(Y)$ the trace of a matrix $Y \in \mathbb{R}^{m \times m}$.
The Frobenius inner product on $\mathbb{R}^{m\times n}$ is given by
\(
\langle Z,Y\rangle := \mathrm{tr}(Z^\top Y),
\)
and the associated Frobenius norm is defined as $\|Z\|:=\sqrt{\langle Z,Z\rangle}$.
For any $X\in\mathbb{S}^m$, its eigenvalues are denoted and ordered as
$
\lambda_1(X)\ge \lambda_2(X)\ge \cdots \ge \lambda_m(X),
$
and its spectral decomposition is written as
\(
X = U \Lambda U^\top,
\)
where $U\in\mathbb{O}^m$ and $\Lambda=\mathrm{diag}(\lambda_1(X),\dots,\lambda_m(X))$.
We denote by $[X]_+$ the projection of $X$ onto the positive semidefinite cone $\mathbb{S}^m_+$. 
If $X = U \Lambda U^\top$ is a spectral decomposition, then
$
[X]_+ = U\,\mathrm{diag}([\lambda_1(X)]_+,\dots,[\lambda_m(X)]_+)\,U^\top,
$
where $[t]_+ := \max\{t,0\}$, for $t\in \mathbb{R}$.
Let $S\colon\mathbb{R}^n\to\mathbb{S}^m$ be a continuously differentiable matrix-valued mapping. 
Its derivative at $x\in\mathbb{R}^n$ is the linear operator $DS(x)\colon\mathbb{R}^n\to\mathbb{S}^m$ defined by
\(
DS(x)h := \sum_{i=1}^n S_i(x)h_i,
\)
where $S_i(x):=\partial S(x)/\partial x_i\in\mathbb{S}^m$. 
The adjoint operator $DS(x)^*\colon\mathbb{S}^m\to\mathbb{R}^n$ is given by
\[
DS(x)^*\Xi :=
\big(\langle S_1(x),\Xi\rangle, \ldots, \langle S_n(x),\Xi\rangle \big)^\top,
\qquad \forall \Xi \in \mathbb{S}^m.
\]
\section{Optimality Conditions for Nonlinear Semidefinite Programming}\label{sessao2}
Sequential optimality conditions play a fundamental role in nonlinear semidefinite programming, particularly in degenerate settings where classical constraint qualifications fail. This phenomenon is especially pronounced in problems involving complementarity structures, for which standard KKT conditions are no longer expected to hold at local minimizers. 
In this section, we revisit first-order optimality conditions for nonlinear semidefinite programming and emphasize the relevance of approximate stationarity concepts as genuine necessary conditions for optimality. 

Consider the nonlinear semidefinite programming problem (NLSDP)  given by:
\begin{equation}\label{NLSDP}
\begin{aligned}
\displaystyle\mathop{\rm minimize }_{x\in \mathbb{R}^n}\quad & f(x),\\
\text{subject to}\quad & H(x)\preceq 0,\\
& P(x)=0,
\end{aligned}
\tag{NLSDP}
\end{equation}
where $f\colon \mathbb{R}^n \to \mathbb{R}$, $H\colon \mathbb{R}^n \to \mathbb{S}^m$, and $P\colon \mathbb{R}^n \to \mathbb{R}^{p\times q}$ are continuously differentiable functions. 
The motivation for introducing matrix equality constraints in problem 
(\ref{NLSDP}) stems from the need to properly capture the structure of the tightened reformulation that will be introduced later. In this setting, spectral decompositions naturally induce equality conditions on specific matrix blocks, which are most conveniently expressed as matrix-valued equality constraints. 

Let us define the KKT conditions for (\ref{NLSDP}). We begin by defining the Lagrangian function of problem (\ref{NLSDP}),
\( 
L\colon\mathbb{R}^{n} \times  \mathbb{S}^{m} \times \mathbb{R}^{p\times q} \to \mathbb{R},
\)
given by:
\begin{equation*}
  L(x, \Omega,\Lambda) := f(x) +  \langle H(x), \Omega \rangle +\langle P(x), \Lambda \rangle.  
\end{equation*}
\begin{definition}
We say that the KKT conditions hold at a feasible point $\bar{x} \in \mathbb{R}^n$ for problem~\eqref{NLSDP} if there exists $(\Omega, \Lambda) \in \mathbb{S}^m_+ \times \mathbb{R}^{p\times q}$ such that
\begin{align*}
& \nabla f(\bar{x}) + DH(\bar{x})^* \Omega + DP(\bar{x})^* \Lambda = 0, \qquad \langle H(\bar{x}), \Omega \rangle = 0.
\end{align*}
In this case, $(\Omega, \Lambda)$ is called a pair of \emph{Lagrange multipliers} associated with $\bar{x}$.
\end{definition}
The condition $\langle H(\bar{x}), \Omega \rangle = 0$ is called the \emph{complementarity condition}. It can be equivalently rewritten as
\(
H(\bar{x})\Omega = 0\)
or equivalently \(
\lambda^{U}_i(H(\bar{x}))\, \lambda^{U}_i(\Omega) = 0, \) for all  \(i = 1, \dots, m,
\)
where \( H(\bar{x}) \) and \( \Omega \) are simultaneously diagonalizable by an orthogonal matrix \( U \). The notation \( \lambda_i^U(X) \) denotes the \(i\)-th eigenvalue of \(X\) in the basis induced by \(U\), that is,
\[
U^\top X U = \operatorname{diag}(\lambda_1^U(X), \dots, \lambda_m^U(X)).
\]
In particular, this ordering is determined by the common diagonalization and does not necessarily coincide with the usual nonincreasing ordering of eigenvalues.

Although the KKT conditions characterize potential candidates for local optimality, they do not, in general, hold as necessary conditions at every local minimizer. In order for these conditions to be valid as necessary optimality conditions, suitable \textit{constraint qualifications} (CQ) must be imposed.
\begin{definition}
Let \( \bar{x} \in \mathbb{R}^n \) be a feasible point for~\eqref{NLSDP},  \( r = \textnormal{rank}(H(\bar{x})) \), \( \{v_1, \dots, v_{m - r}\} \subset \mathbb{R}^m \) be an orthonormal basis for the null space of \( H(\bar{x}) \), \( P(\bar{x}) \in \mathbb{R}^{p \times q} \), and let \( \{s_1, \dots, s_p\} \subset \mathbb{R}^p \) and \( \{u_1, \dots, u_q\} \subset \mathbb{R}^q \) be orthonormal bases of \( \mathbb{R}^p \) and \( \mathbb{R}^q \), respectively.
We say that \( \bar{x} \) satisfies the \emph{nondegeneracy constraint qualification} if the set of vectors
\begin{equation*}
\left\{
DH(\bar{x})^*(v_i v_j^\top)
\right\}_{i,j=1}^{m-r}
\;\cup
\left\{ 
\left( s_i^\top P_1(\bar{x})  u_j, \dots, s_i^\top P_n(\bar{x})  u_j \right)
\,\middle|\, i = 1, \dots, p;\; j = 1, \dots, q 
\right\}
\end{equation*}
is linearly independent in \( \mathbb{R}^n \).
\end{definition}
The nondegeneracy condition plays a crucial role in the analysis of semidefinite programming problems, as it ensures the uniqueness of Lagrange multipliers associated with the semidefinite constraint. This property is fundamental for the stability of optimality conditions and is often used in the convergence analysis of algorithms.
\begin{definition}
\label{rob}
We say that a feasible point \( \bar{x} \in \mathbb{R}^n \) satisfies \emph{Robinson's condition} for problem~\eqref{NLSDP} if the operator
\(
DP(\bar{x}) \colon \mathbb{R}^n \to \mathbb{R}^{p \times q}
\)
is surjective, and there exists a direction \( d \in \mathbb{R}^n \) such that:
\(
DP(\bar{x})d = 0\) and \( H(\bar{x}) + DH(\bar{x})d \prec 0\).
\end{definition}
Robinson's condition is widely recognized for its role in ensuring the existence and boundedness of Lagrange multipliers. In the context of semidefinite programming, it ensures that the KKT conditions hold at a local minimizer and constitutes a regularity condition that is weaker than standard nondegeneracy.

Recent developments indicate that classical CQs, such as
Robinson’s condition, are not essential for establishing the convergence of
augmented Lagrangian–based methods. In particular, the sequential constant rank-type
conditions are naturally adapted to the behavior of iterative algorithms and do
not require boundedness or uniqueness of Lagrange multipliers; see, e.g.,~\cite{seqcq}. 

Despite the importance of classical CQs, such as
nondegeneracy and Robinson's condition, these assumptions may fail, especially
in degenerate contexts. This motivates the study of weaker, yet still meaningful
conditions that are satisfied by every local minimizer. In this setting, we now
introduce sequential optimality conditions for~\eqref{NLSDP}; see, e.g.,~\cite{daiana}.
\begin{definition}
We say that a feasible point $\bar{x} \in \mathbb{R}^n$ satisfies the Approximate-KKT (AKKT) condition for problem~\eqref{NLSDP} if  there exist sequences $\{x^k\} \subset \mathbb{R}^n$, with $x^k \to \bar{x}$, and $\{\Omega^k\} \subset \mathbb{S}_+^m$, $\{\Lambda^k\} \subset \mathbb{R}^{p\times q}$ such that:
\begin{align}
& \lim_{k \to \infty} \nabla f(x^k) + DH(x^k)^* \Omega^k + DP(x^k)^* \Lambda^k = 0,\label{opti}  \\
& \text{if } \lambda_i^{U}(H(\bar{x})) < 0, \text{ then } \lim_{k \to \infty} \lambda_i^{S_k}(\Omega^k) = 0, 
\end{align}
where $U$ and $S_k$ are orthogonal matrices that diagonalize $H(\bar{x})$ and $\Omega^k$, respectively, with $S_k \to U$ as $k \to \infty$.
\end{definition}

We emphasize that the convergence $S_k \to U$ guarantees that the eigenvalues of $H(\bar{x})$ and $\Omega^k$ are properly aligned, allowing us to identify active and inactive components in a way analogous to nonlinear programming. The notation $\lambda_i^{U}(H(\bar{x}))$ and $\lambda_i^{S_k}(\Omega^k)$ indicates that the eigenvalues are ordered according to the diagonalizations induced by $U$ and $S_k$, respectively. In other words, $$
H(\bar x) = U\,\mathrm{diag}(\lambda_1^U(X),\dots,\lambda_m^U(X))\,U^\top.$$

We now present a simple example to illustrate the use of sequential optimality conditions in NLSDP when classical KKT conditions fail at a local minimizer, see~\cite{daiana}.
\begin{example}
Consider
\[
\displaystyle\min_{x\in\mathbb R} \, 2x,
\quad\text{s.t.}\quad
H(x):=
\begin{pmatrix}
0 & x\\
x & -1
\end{pmatrix}\preceq 0 .
\]

\noindent\emph{(i) $\bar x=0$ is not a KKT point.}
The KKT conditions require the existence of some $\Omega \succeq 0$ such that
$
\nabla f(\bar x)+DH(\bar x)^*\Omega=0$ and $\langle H(\bar x),\Omega\rangle=0.
$
Write $\Omega=\begin{pmatrix}\mu_{11}&\mu_{12}\\ \mu_{12}&\mu_{22}\end{pmatrix}$. Since
$
DH(\bar x)^*\Omega= 2\mu_{12},$
stationarity gives $2+2\mu_{12}=0$. Hence, $\mu_{12}=-1$. Furthermore, $H(\bar x)=\begin{pmatrix}0&0\\0&-1\end{pmatrix}$, complementarity yields $\langle H(\bar x),\Omega\rangle = -\mu_{22}$, and hence, $\mu_{22}=0$.
Therefore, any candidate multiplier must be
\(
\Omega=\begin{pmatrix}\mu_{11}&-1\\ -1&0\end{pmatrix}.
\)
This contradicts $\Omega\succeq 0$,
so $\bar x$ is not a KKT point.

\noindent\emph{(ii)} $\bar{x}=0$ is an AKKT point: let $x^k := -\frac{k}{k^2-1}$ and 
$\Omega^k := \begin{pmatrix} k & -1 \\ -1 & 1/k \end{pmatrix} \in \mathbb{S}_+^2$. 
Then $x^k \to 0$ and $\nabla f(x^k)+DH(x^k)^*\Omega^k=0$.
Moreover, $H(x^k)$ and $\Omega^k$ commute. Hence, they admit a common orthogonal diagonalization. Let $S_k$ be an orthogonal matrix such that
$
S_k^\top H(x^k) S_k = \mathrm{diag}(\lambda_1^k,\lambda_2^k)$ and $
S_k^\top \Omega^k S_k = \mathrm{diag}(\mu_1^k,\mu_2^k)
$. The sequence $\{S_k\}$ may be chosen such that it converges to the identity matrix $I$, which diagonalizes $H(\bar{x})$.
Thus, $\lambda_2^I(H(\bar x))=-1<0$ and a straightforward computation gives $\lambda_2^{S_k}(\Omega^k)=0$. 
Therefore, $\bar{x}$ is a local minimizer that satisfies the AKKT condition, although it is not a KKT point.
\end{example}

In the next section, we investigate optimization problems with matrix complementarity constraints, focusing on their formulation, intrinsic structure, and associated optimality conditions. 

\section{NLSDP with Complementarity Constraints}\label{sessao3}
In this section, we focus on the analysis of optimization problems involving conic complementarity constraints, with particular emphasis on those defined over the cones of positive and negative semidefinite matrices. For this purpose, we consider a formulation that includes only complementarity constraints, temporarily setting aside classical equality and inequality constraints, as well as standard semidefinite constraints (positive or negative). It is worth noting that such constraints could be incorporated in a straightforward manner, following, for instance, the approach proposed in~\cite{leo}. In this paper, we study the \emph{semidefinite cone MPCC} (SDCMPCC), defined as
\begin{equation}\label{SDCMPCC}
				\begin{array}{cl}
					\tag{SDCMPCC}\displaystyle\mathop{\rm minimize }_{x\in \mathbb{R}^n}  & f(x),  \\
					{\rm subject\ to } & \langle G(x), H(x) \rangle \geq 0, \quad G(x) \succeq 0, \quad H(x)\preceq0,
				\end{array}
			\end{equation}
where $f\colon \mathbb{R}^n \to \mathbb{R}$ and $G,H\colon \mathbb{R}^n \to \mathbb{S}^m$ are continuously differentiable functions. We impose the inequality $\langle G(x), H(x) \rangle \geq 0$. Under $G(x) \succeq 0$ and $H(x) \preceq 0$, the spectral properties of the semidefinite cones yield $\langle G(x), H(x) \rangle \leq 0$. Hence, the constraints guarantee $\langle G(x), H(x) \rangle = 0$, which characterizes a \emph{complementarity constraint}. The inequality formulation is advantageous, as it naturally induces a sign condition on the associated Lagrange multiplier, which will be exploited in the algorithmic analysis.

In the following example, we illustrate how an SDCMPCC arises naturally in the context of bilevel optimization; see, for instance,~\cite{dempe2020bilevel, dempe2018optimality}.
\begin{example}
Consider the following optimistic bilevel programming problem:
\begin{equation*}
	\begin{aligned}
\mathop{\text{minimize}}\limits_{x,y} \quad & f(x,y), \\
	\text{subject to} \quad & g(x,y) \leq 0, \\
	& y \in \arg\min_{\bar{y}} \left\{ \varphi(x,\bar{y}) \;\middle|\; G(x,\bar{y}) \succeq 0 \right\},
	\end{aligned}
\end{equation*}
where $f, \varphi \colon \mathbb{R}^n \times \mathbb{R}^m \to \mathbb{R}$, $g\colon\mathbb{R}^n \times \mathbb{R}^m  \to \mathbb{R}^k$, and $G\colon\mathbb{R}^n\times \mathbb{R}^m \to \mathbb{S}^p$.
For a fixed $x \in \mathbb{R}^n$, the lower-level problem reduces to the following NLSDP:
\begin{equation*}
	\begin{aligned}
\mathop{\text{minimize}}\limits_{y} \quad & \varphi^x(y):=\varphi(x,y),\\
		\text{subject to} \quad & G^x(y):=G(x,y) \preceq 0.
	\end{aligned}
\end{equation*}

In bilevel optimization, a common strategy consists of reformulating the original problem as a single-level optimization problem. One such reformulation is obtained by replacing the lower-level problem with its optimality conditions. In particular, if these conditions are both necessary and sufficient, then the lower-level problem can be equivalently described by its KKT system. Thus, under suitable regularity assumptions (see, for instance, Assumption~4.1 in~\cite{dempe2018optimality}), we obtain the following single-level reformulation:
\[
\begin{aligned}
\min_{x,y,\Omega} \quad & f(x,y), \\
\text{s.t.} \quad 
& g(x,y) \le 0, \\
& \nabla \varphi^x(y) + DG^x(y)^* \Omega = 0, \\
& G(x,y) \preceq 0,\;\Omega \succeq 0,\;\langle G(x,y), \Omega \rangle \geq 0.
\end{aligned}
\]
This problem is precisely of the SDCMPCC type considered in this work, with additional standard equality and inequality constraints, which can be dealt with. For this reformulated problem, however, classical KKT conditions may fail to hold at local solutions, as discussed below.
\end{example}

It is well known that, in classical nonlinear semidefinite programming, Robinson’s condition guarantees that the KKT system provides necessary optimality conditions and that the set of Lagrange multipliers is nonempty and bounded. In contrast, problems involving semidefinite cone complementarity constraints exhibit difficulties analogous to those encountered in MPCC, particularly due to the systematic failure of standard CQs; see, e.g.,~\cite{dsun}. As a consequence, classical KKT conditions cannot, in general, be expected to hold at local minimizers.

This motivates the introduction of weaker stationarity concepts, such as C- and W-stationarity, which are better suited to capture the local optimality properties of solutions in the SDCMPCC setting; see, e.g.,~\cite{dsun,hector}.

To derive refined stationarity conditions tailored to~\eqref{SDCMPCC}, alternative formulations of the complementarity constraint have been proposed. Two prominent approaches are: (i) representing complementarity via a nonconvex cone constraint involving the graph of the normal cone to the positive semidefinite cone, $\operatorname{gph} N_{\mathbb{S}_+^n}$,
$(G(z), H(z)) \in \operatorname{gph} N_{\mathbb{S}_+^n},$ and (ii) expressing it as a nonsmooth equation based on the projection onto the positive semidefinite cone:
$$G(z) - \Pi_{\mathbb{S}_+^n}\bigl(G(z) + H(z)\bigr) = 0.$$

The former admits a geometric interpretation within variational analysis, enabling the use of normal cone calculus, whereas the latter allows the application of nonsmooth analysis techniques, leading to stationarity conditions based on generalized differentiation. Both perspectives contribute to a deeper understanding of the feasible set structure and the nature of optimal solutions.

Building on these reformulation-based approaches, works such as~\cite{dsun, yan2011smoothing} have introduced stationarity concepts for SDCMPCC. However, such notions are not necessarily aligned with the behavior of the algorithmic scheme considered in this work. In particular, they do not, in general, correspond to the stationarity conditions naturally generated by the augmented Lagrangian framework that we analyze. In this sense, our analysis focuses on stationarity notions that arise directly from the structure of the method and reflect the asymptotic behavior of the generated sequences.

For this reason, rather than relying on these formulations, we adopt an alternative approach based on a tightened reformulation of the problem. This strategy allows us to derive optimality conditions that are intrinsically connected to the structure of the algorithm.
More precisely, the tightened problem provides a natural framework to obtain W-stationarity as a first-order condition. This notion will serve as a fundamental building block for the development of stronger stationarity concepts, which are consistent with both the geometry of the problem and the asymptotic behavior of the sequences generated by our method.

The formal definition of the tightened problem is presented in the next section.
\section{Spectral Decomposition and Tightened Problem Formulation}\label{sessao4}
In this section, we introduce a local reformulation of the~\eqref{SDCMPCC} based on the spectral decomposition at a given feasible point. This construction leads to a locally equivalent problem, referred to as the \emph{tightened problem}, which isolates the biactive and inactive components of the complementarity constraints. The tightened formulation will play a central role in the derivation of optimality conditions and sequential stationarity concepts, following a structure analogous to that adopted in the MPCC literature.

Let $A \in \mathbb{S}^m$ and let
\(
A = U \Lambda(A) U^\top
\)
be a spectral decomposition of $A$, where $U \in \mathbb{O}^{m}$ and $\Lambda(A) = \operatorname{diag}(\lambda_1(A), \dots, \lambda_m(A))$. We define the index sets
$$
\alpha := \{ i : \lambda_i(A) > 0 \}, \quad
\beta := \{ i : \lambda_i(A) = 0 \}, \quad
\gamma := \{ i : \lambda_i(A) < 0 \}.
$$
Accordingly, $\Lambda(A)$ can be written in block-diagonal form as
\[
\Lambda(A) =
\begin{bmatrix}
\Lambda_{\alpha\alpha}(A) & 0 & 0 \\
0 & \Lambda_{\beta\beta}(A) & 0 \\
0 & 0 & \Lambda_{\gamma\gamma}(A)
\end{bmatrix},
\]
where $\Lambda_{\alpha\alpha}(A)$, $\Lambda_{\beta\beta}(A)$, and $\Lambda_{\gamma\gamma}(A)$ are diagonal matrices containing the eigenvalues indexed by $\alpha$, $\beta$, and $\gamma$, respectively, with dimensions $|\alpha|$, $|\beta|$, and $|\gamma|$.
More generally, given these index sets, any $B \in \mathbb{S}^{m}$ can be partitioned into the six off-diagonal blocks 
$B_{\alpha\beta}$, $B_{\alpha\gamma}$, and $B_{\beta\gamma}$
, together with the diagonal blocks $B_{\alpha\alpha}$, $B_{\beta\beta}$, and $B_{\gamma\gamma}$.
Applying this decomposition to the matrices $G(\bar{x})$ and $H(\bar{x})$ at a feasible point $\bar{x}$, we obtain
\small{\[
G(\bar{x}) = U \begin{bmatrix}
\Lambda_{\alpha\alpha}(G(\bar{x})) & 0 & 0 \\
0 & \Lambda_{\beta\beta}(G(\bar{x})) & 0 \\
0 & 0 & 0
\end{bmatrix} U^\top, \;
H(\bar{x}) = U \begin{bmatrix}
0 & 0 & 0 \\
0 & \Lambda_{\beta\beta}(H(\bar{x})) & 0 \\
0 & 0 & \Lambda_{\gamma\gamma}(H(\bar{x}))
\end{bmatrix} U^\top,
\]}\normalsize
where $\Lambda_{\alpha\alpha}(G(\bar{x})) \succ 0$, $\Lambda_{\gamma\gamma}(H(\bar{x})) \prec 0$, and $\Lambda_{\beta\beta}(G(\bar{x})) = \Lambda_{\beta\beta}(H(\bar{x})) = 0$ correspond to the potentially biactive components.
Now, let $U\in\mathbb O^m$ be an orthogonal matrix that simultaneously diagonalizes
$G(\bar x)$ and $H(\bar x)$, and denote
\[
\widetilde G(x):=U^\top G(x)U,
\qquad
\widetilde H(x):=U^\top H(x)U.
\]
Then, at $\bar x$, we have
\small{\[
(\widetilde G(\bar x))_{\alpha\alpha} \succ 0, \quad 
(\widetilde G(\bar x))_{\alpha\beta} = (\widetilde G(\bar x))_{\alpha\gamma} = 0, \quad 
(\widetilde G(\bar x))_{\beta\beta} = (\widetilde G(\bar x))_{\beta\gamma} = (\widetilde G(\bar x))_{\gamma\gamma} = 0,
\]
\[
(\widetilde H(\bar x))_{\gamma\gamma} \prec 0, \quad 
(\widetilde H(\bar x))_{\alpha\gamma} = (\widetilde H(\bar x))_{\beta\gamma} = 0, \quad 
(\widetilde H(\bar x))_{\alpha\alpha} = (\widetilde H(\bar x))_{\alpha\beta} = (\widetilde H(\bar x))_{\beta\beta} = 0.
\]}
\normalsize
Thus, we define the tightened problem associated with $\bar{x}$ as
\begin{equation}\label{TSDCMPCC_refined}
\begin{array}{cl}\tag{TNLP($\bar{x}$)}
\displaystyle\mathop{\rm minimize }_{x\in \mathbb{R}^n}   & f(x), \\\\
\text{subject to} 
& (\widetilde G(x))_{\beta\beta} = 0, \quad (\widetilde G(x))_{\gamma\gamma} = 0, \\
& (\widetilde G(x))_{\alpha\beta} = 0, \quad (\widetilde G(x))_{\alpha\gamma} = 0, \quad (\widetilde G(x))_{\beta\gamma} = 0, \\
& (\widetilde H(x))_{\beta\beta} = 0, \quad (\widetilde H(x))_{\alpha\alpha} = 0, \quad (\widetilde H(x))_{\alpha\beta} = 0, \\
& (\widetilde H(x))_{\alpha\gamma} = 0, \quad (\widetilde H(x))_{\beta\gamma} = 0, \\
& (\widetilde G(x))_{\alpha\alpha} \succeq 0, \quad (\widetilde H(x))_{\gamma\gamma} \preceq 0.
\end{array}
\end{equation}

Note that the tightened problem~\eqref{TSDCMPCC_refined} depends explicitly on the reference point $\bar{x}$, which is itself feasible for~\eqref{TSDCMPCC_refined}. Moreover, the feasible set of~\eqref{TSDCMPCC_refined} is locally contained in the feasible set of~\eqref{SDCMPCC}, then $\bar{x}$ is also a local minimizer of~\eqref{TSDCMPCC_refined}. This reformulation provides a framework for defining constraint qualifications that can be used to derive optimality conditions for the original problem~\eqref{SDCMPCC} by applying the usual conditions for the standard problem~\eqref{TSDCMPCC_refined}. Recently, in~\cite{hector}, a tightened formulation was presented for general conic complementarity constraints, which reduces to~\eqref{TSDCMPCC_refined} in the case of the positive-semidefinite cone.

We now introduce a CQ for~\eqref{SDCMPCC}, defined in terms of the block structure induced by the tightened reformulation. This condition is a natural extension of the classical Robinson's condition from nonlinear semidefinite programming to the present complementarity setting.
\begin{definition}
\begin{sloppypar}
We say that a feasible point \( \bar{x} \in \mathbb{R}^n \) satisfies the \emph{SDCMPCC-Robinson's condition} if the linear operator
\end{sloppypar}
\small{\[
\begin{aligned}
\mathcal{A} := 
\big(& 
D\big((\widetilde G)_{\beta\beta}\big)(\bar{x}),\,
D\big((\widetilde G)_{\gamma\gamma}\big)(\bar{x}),\,
D\big((\widetilde G)_{\alpha\beta}\big)(\bar{x}),\,
D\big((\widetilde G)_{\alpha\gamma}\big)(\bar{x}),\,
D\big((\widetilde G)_{\beta\gamma}\big)(\bar{x}), \\
& D\big((\widetilde H)_{\beta\beta}\big)(\bar{x}),\,
D\big((\widetilde H)_{\alpha\alpha}\big)(\bar{x}),\,
D\big((\widetilde H)_{\alpha\beta}\big)(\bar{x}),\,
D\big((\widetilde H)_{\alpha\gamma}\big)(\bar{x}),\,
D\big((\widetilde H)_{\beta\gamma}\big)(\bar{x}) 
\big)
\colon \mathbb{R}^n \to \mathcal{S}.
\end{aligned}
\]}\normalsize is surjective, where
\[
\mathcal{S} = \mathbb{S}^{|\beta|} \times \mathbb{S}^{|\gamma|} \times \mathbb{R}^{|\alpha|\times|\beta|} \times \mathbb{R}^{|\alpha|\times|\gamma|} \times \mathbb{R}^{|\beta|\times|\gamma|}
\times \mathbb{S}^{|\beta|} \times \mathbb{S}^{|\alpha|} \times \mathbb{R}^{|\alpha|\times|\beta|} \times \mathbb{R}^{|\alpha|\times|\gamma|} \times \mathbb{R}^{|\beta|\times|\gamma|}.
\]
\end{definition}

 \begin{remark}\begin{sloppypar}
The SDCMPCC-Robinson condition coincides with the classical Robinson's condition applied to the tightened problem~\eqref{TSDCMPCC_refined}. This connection is particularly relevant, as it allows us to employ standard CQs on the tightened formulation in order to derive optimality conditions for the original problem~\eqref{SDCMPCC}. In this setting,  SDCMPCC-Robinson coincides with SDCMPCC-nondegeneracy, as defined in~\cite{dsun}. Indeed, the tightened formulation consists only of equality constraints, while the semidefinite constraints are strictly satisfied at $\bar{x}$. As a consequence, no interiority condition needs to be verified, that is, one can take $d=0$ in Definition~\ref{rob}. Although these conditions are usually treated as distinct in the literature, this distinction arises from the presence of additional constraints, which are absent in the present formulation.
\end{sloppypar}\end{remark}

We now introduce the KKT conditions associated with~\eqref{TSDCMPCC_refined}.
\begin{definition}
Let \( \bar{x} \in \mathbb{R}^n \) be a feasible point of~\eqref{TSDCMPCC_refined}. Let \(U \in \mathbb{O}^m\) associated with the spectral decomposition at \(\bar{x}\) used to define the tightened formulation, and define
\[
\widetilde{G}(x):=U^\top G(x)U, \qquad \widetilde{H}(x):=U^\top H(x)U,
\]
together with the corresponding index sets \(\alpha,\beta,\gamma\) and block partitions.
We say that \( \bar{x} \) satisfies the \emph{KKT conditions} for the tightened problem if there exist symmetric matrices
\(
\Gamma_{G}, \Gamma_{H} \in \mathbb{S}^m
\)
such that
\begin{equation}\label{kkt_tight_stationarity2}
\nabla f(\bar{x}) + DG(\bar{x})^\ast \Gamma_{G} + DH(\bar{x})^\ast \Gamma_{H} = 0,
\end{equation}
and, writing
\begin{equation}\label{multiplicadores1}
\Gamma_{\widetilde{G}}:=U^\top\Gamma_{G} U, \qquad \Gamma_{\widetilde{H}}:=U^\top\Gamma_{H} U,
\end{equation}
the  matrices \(\Gamma_{\tilde{G}}\) and \(\Gamma_{\widetilde{H}}\) satisfy $(\Gamma_{\widetilde{G}})_{\alpha\alpha} =0$ and $
(\Gamma_{\tilde{H}})_{\gamma\gamma} =0.$
\end{definition}
\section{Stationarity Notions}\label{sessao5}

In this section, we introduce and formalize notions of exact and sequential stationarity tailored to~\eqref{SDCMPCC}. Our approach builds on the framework developed in~\cite{leo} for MPCC, which we extend to accommodate the structure of~\eqref{SDCMPCC} through the tightened reformulation introduced in the previous section. 

The exact notions characterize optimality at a candidate solution in terms of first-order conditions for the tightened problem, while the sequential notions capture the behavior of approximate solutions generated by iterative methods. In particular, W-stationarity corresponds to first-order optimality of the tightened problem, whereas AW-stationarity represents its sequential counterpart. The notions of C- and AC-stationarity strengthen these conditions by imposing a compatibility requirement on the biactive block of the multipliers.

This unified treatment provides a framework that is both adapted to the complementarity structure of the problem and directly aligned with the convergence properties of the augmented Lagrangian method analyzed later in the paper.
\subsection{Exact Stationarity Notions}
We now introduce the notions of stationarity that will be used in the sequel. 
Since our approach is based on the tightened reformulation introduced in the previous section, the relevant block structure is the one induced by the fixed orthogonal matrix \(U\) and by the partition \((\alpha,\beta,\gamma)\) associated with the reference feasible point \(\bar x\).

\begin{definition}
We say that a feasible point \(\bar{x}\) is a \emph{weakly stationary} \emph{(W-statio-nary)} point for~\eqref{SDCMPCC} if it satisfies the KKT conditions of the tightened problem~\eqref{TSDCMPCC_refined} associated with \(\bar{x}\).
\end{definition}

W-stationarity is the weakest stationarity notion in our framework. It only requires the existence of multipliers satisfying the first-order optimality conditions of the tightened problem, without imposing any additional sign or compatibility condition on the biactive block.

\begin{definition}
Let \(\bar{x}\) be a W-stationary point of~\eqref{SDCMPCC} and let \(\Gamma_G,\Gamma_H\in\mathbb S^m\) be associated multipliers. We say that \(\bar{x}\) is a \emph{C-stationary point} if
\[
\left\langle
(\widetilde\Gamma_G)_{\beta\beta},
(\widetilde\Gamma_H)_{\beta\beta}
\right\rangle \le 0,
\]
where
\[
\widetilde\Gamma_G:=U^\top\Gamma_G U,
\qquad
\widetilde\Gamma_H:=U^\top\Gamma_H U.
\]
\end{definition}

The above notion of C-stationarity is closely related to the definition proposed in~\cite{dsun}. In
our setting, it arises naturally as a compatibility condition on the multipliers associated with
the biactive block, reflecting the complementarity structure of the original problem within
the tightened formulation. While the approach in~\cite{dsun} is based on variational analysis and
normal cone characterizations of the complementarity set, leading to coupling conditions
involving both diagonal and off-diagonal blocks of the multipliers, our construction is derived
from a tightened reformulation induced by the spectral structure, in which off-diagonal
components are explicitly eliminated through equality constraints. As a consequence, the
resulting stationarity condition does not involve cross-block coupling terms and reduces to a
compatibility condition restricted to the biactive block. Despite these differences, both
frameworks rely on the same underlying partition of the variables and capture analogous
first-order optimality features. A precise equivalence between the two notions is not pursued
here and remains an open question.

At this stage, W-stationarity can be obtained as a first-order optimality condition under standard constraint qualifications for the tightened problem, such as Robinson’s condition. Stronger stationarity properties will be derived in subsequent sections.
\subsection{Approximate Stationarity Notions}
We now introduce sequential stationarity concepts for the problem~\eqref{SDCMPCC}, extending the Approximate Karush–
Kuhn–Tucker (AKKT) condition and the notions of AW- and AC-stationarity from MPCC~\cite{leo} to the semidefinite setting.
These notions are motivated by the fact that, in the presence of complementarity constraints, standard constraint qualifications typically fail, so classical KKT conditions are not guaranteed to hold at local minimizers. In contrast, sequential conditions are genuine necessary optimality conditions, satisfied by every local minimizer without any constraint qualification. Moreover, they are consistent with the behavior of iterative methods, as they naturally arise from algorithmic stopping criteria.
We begin with the definition of AW-stationarity.
\begin{definition}\label{aw}
Let $\bar x$ be a feasible point of~\eqref{SDCMPCC}, and let
$U \in \mathbb{O}^m$ be an orthogonal matrix that simultaneously diagonalizes
$G(\bar x)$ and $H(\bar x)$.
We say that $\bar x$ satisfies the \emph{AW}-stationarity condition for~\eqref{SDCMPCC} if
there exist sequences $\{x^k\}\subset \mathbb R^n$, $\{\Gamma_{G}^k\}\subset \mathbb S^m$,
$\{\Gamma_{H}^k\}\subset \mathbb S^m$, and sequences $\{S_k\},\{V_k\}\subset \mathbb O^m$
such that $S_k$ diagonalizes $G(x^k)$, $V_k$ diagonalizes $H(x^k)$,
\[
x^k\to\bar x,\qquad S_k\to U,\qquad V_k\to U,
\]
and
\begin{equation}\label{a}
\nabla f(x^k)+DG(x^k)^*\,\Gamma_G^{k}+DH(x^k)^*\,\Gamma_H^{k}\to0,
\end{equation}
together with
\begin{equation}\label{b}
(\widetilde\Gamma_{G}^k)_{\alpha\alpha}
:=
(S_k^\top \Gamma_{G}^k S_k)_{\alpha\alpha}\to0,
\qquad
(\widetilde\Gamma_{H}^k)_{\gamma\gamma}
:=
(V_k^\top \Gamma_{H}^k V_k)_{\gamma\gamma}\to0.
\end{equation}
\end{definition}
Note that AW-stationarity coincides with the AKKT condition applied to the tightened problem \textnormal{TNLP($\bar{x}$)}. 
Consequently, AW-stationarity constitutes a sequential optimality condition for the tightened problem \textnormal{TNLP($\bar{x}$)}, and therefore also provides a necessary optimality condition for the original problem~\eqref{SDCMPCC}.

We now introduce a stationarity concept tailored for the SDCMPCC framework, namely 
\emph{approximate Clarke stationarity} (AC-stationarity).
\begin{definition}\label{def:AC}
Let $\bar{x}$ be an AW-stationary point of~\eqref{SDCMPCC}, and let 
$\{x^k\}$, $\{\Gamma_G^k\}$, $\{\Gamma_H^k\}$, $\{S_k\}$, and $\{V_k\}$
be sequences satisfying Definition~\ref{aw}.
We say that $\bar{x}$ satisfies the \emph{approximate Clarke stationarity} 
(AC-stationarity) condition if
\begin{equation}\label{AC}
\limsup_{k\to\infty}
\left\langle 
(\widetilde\Gamma_G^k)_{\beta\beta},\,
(\widetilde\Gamma_H^k)_{\beta\beta}
\right\rangle
\le 0,
\end{equation}
where
\[
\widetilde\Gamma_G^k := S_k^\top \Gamma_G^k S_k,
\qquad
\widetilde\Gamma_H^k := V_k^\top \Gamma_H^k V_k.
\]
\end{definition}
It is straightforward to verify that every AC-stationary point is also AW-stationary. The converse, however, does not hold in general. 
\begin{example}
Consider
\[
\min_{x\in\mathbb R^6} f(x)
\quad\text{s.t.}\quad
G(x)\succeq0,\quad H(x)\preceq0,\quad \langle G(x),H(x)\rangle\ge0,
\]
where
\(
f(x)=-x_1-x_3+x_4+x_6,
\)
\(
G(x)=
\begin{pmatrix}
x_1 & x_2\\
x_2 & x_3
\end{pmatrix}\) and
\(H(x)=-
\begin{pmatrix}
x_4 & x_5\\
x_5 & x_6
\end{pmatrix}.
\)
Let \(\bar x=0\). Then \(G(\bar x)=H(\bar x)=0\), hence
\(
I_\alpha(\bar x)=I_\gamma(\bar x)=\emptyset\) and 
\(
I_\beta(\bar x)=\{1,2\}.
\)
We first demonstrate that \(\bar x\) is AW-stationary. Take \(x^k=\bar x\),
\(S_k=V_k=I\), and
\(
\Gamma_G^k=\Gamma_H^k=I.
\)
Since
\[
\nabla f(\bar x)=(-1,0,-1,1,0,1),
\]
and
\[
DG(\bar x)^*I=(1,0,1,0,0,0),
\qquad
DH(\bar x)^*I=(0,0,0,-1,0,-1),
\]
we obtain
\[
\nabla f(\bar x)+DG(\bar x)^*\Gamma_G^k
+DH(\bar x)^*\Gamma_H^k=0.
\]
Thus, \(\bar x\) is AW-stationary.

We now prove that \(\bar x\) is not AC-stationary. Suppose, by contradiction, that
\(\bar x\) satisfies AC-stationarity. Then there exist sequences satisfying
\[
\nabla f(x^k)+DG(x^k)^*\Gamma_G^k
+DH(x^k)^*\Gamma_H^k\to0.
\]
Since \(G\) and \(H\) are linear, this condition gives componentwise
\[
(\Gamma_G^k)_{11}\to1,\qquad
(\Gamma_G^k)_{12}\to0,\qquad
(\Gamma_G^k)_{22}\to1,
\]
and
\[
(\Gamma_H^k)_{11}\to1,\qquad
(\Gamma_H^k)_{12}\to0,\qquad
(\Gamma_H^k)_{22}\to1.
\]
Therefore,
\(
\Gamma_G^k\to I\) and 
\(
\Gamma_H^k\to I.
\)
Since the whole block is biactive, we have
\[
(\widetilde\Gamma_G^k)_{\beta\beta}\to I,
\qquad
(\widetilde\Gamma_H^k)_{\beta\beta}\to I.
\]
Consequently,
\[
\lim_{k\to\infty}
\left\langle
(\widetilde\Gamma_G^k)_{\beta\beta},
(\widetilde\Gamma_H^k)_{\beta\beta}
\right\rangle
=
\langle I,I\rangle
=
2>0,
\]
which contradicts the AC-stationarity condition. Hence, \(\bar x\) is AW-stationary
but not AC-stationary.
\end{example}

To prove that AC-stationarity is a sequential optimality condition for the problem~\eqref{SDCMPCC}, 
we follow the strategy of~\cite{leo} and introduce an auxiliary reformulation of the problem. 
This reformulation decouples the complementarity structure from \( G(x) \) and \( H(x) \), 
allowing the analysis of the resulting sequential stationarity property.
We reformulate~\eqref{SDCMPCC} by introducing auxiliary symmetric matrix variables $W_G, W_H \in \mathbb{S}^m$:
\begin{equation}\label{SDCMPCC-R}
\tag{SDCMPCC-R} 
\begin{aligned}
\displaystyle\mathop{\rm minimize }_{x,\, W_G,\, W_H} \quad & f(x), \\
\text{subject to} \quad
&W_G = G(x),  W_H = H(x), \\
& W=(W_G,W_H) \in \mathcal{W},  
\end{aligned}
\end{equation}
where $\mathcal{W} := \left\{ (W_G, W_H) \in \mathbb{S}^m_{+} \times \mathbb{S}^m_{-} \;\middle|\;
 \langle W_G, W_H \rangle \geq 0 \right\}.$ 
 The validity of Guignard’s constraint qualification for the feasible set $\mathcal{W}$ is established in Appendix~A (Theorem~\ref{guinard}).
We are now in a position to establish a fundamental result: every local minimizer of~\eqref{SDCMPCC} admits sequences of points and symmetric multipliers that approximate stationarity, where the sequence of points converges to a feasible point. 
\begin{theorem}
\label{thm:AKKT-SDCMPCC}
Let $\bar{x}$ be a local minimizer of~\eqref{SDCMPCC}. Then there exist sequences satisfying the AC-stationarity conditions.
\end{theorem}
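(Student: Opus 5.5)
Following the strategy of~\cite{leo}, I will work with the reformulation~\eqref{SDCMPCC-R}. If $\bar x$ is a local minimizer of~\eqref{SDCMPCC}, then $(\bar x, G(\bar x), H(\bar x))$ is a local minimizer of~\eqref{SDCMPCC-R}. I will apply the classical external penalty localization argument. For $\rho_k\to\infty$, let $(x^k,W^k)$ be a global minimizer of
\[
f(x)+\tfrac{\rho_k}{2}\big(\|G(x)-W_G\|^2+\|H(x)-W_H\|^2\big)+\tfrac12\|x-\bar x\|^2+\tfrac12\|W-\bar W\|^2
\]
over $\{(x,W): \|x-\bar x\|\le\delta,\ W\in\mathcal W,\ \|W-\bar W\|\le\delta\}$, where $\bar W=(G(\bar x),H(\bar x))$. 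The standard argument gives $(x^k,W^k)\to(\bar x,\bar W)$, so the ball constraints are eventually inactive. The first-order condition for $x$ then reads
\[
\nabla f(x^k)+DG(x^k)^*\Gamma_G^k+DH(x^k)^*\Gamma_H^k+(x^k-\bar x)=0,
\]
with $\Gamma_G^k:=\rho_k(G(x^k)-W_G^k)$ and $\Gamma_H^k:=\rho_k(H(x^k)-W_H^k)$. This yields~\eqref{a}. The first-order condition in $W$ reads
\[
(\Gamma_G^k,\Gamma_H^k)-(W^k-\bar W)\in N_{\mathcal W}(W^k),
\]
where I use the limiting normal cone. This is a nonconvex set, but since $W^k$ is a local minimizer of a smooth function over $\mathcal W$, the regular (Fréchet) normal cone applies. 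By Theorem~\ref{guinard} (Guignard for $\mathcal W$), that cone equals the polar of the linearized cone. So, up to a vanishing perturbation $E^k\to0$, there are $\mu_G^k\in\mathbb S^m_+$, $\mu_H^k\in\mathbb S^m_-$ and $\eta_k\ge0$ with $\langle\mu_G^k,W_G^k\rangle=0$, $\langle\mu_H^k,W_H^k\rangle=0$, $\eta_k\langle W_G^k,W_H^k\rangle=0$ (vacuous, since the product is zero), and
\[
\Gamma_G^k=-\mu_G^k-\eta_kW_H^k+E_G^k,\qquad \Gamma_H^k=-\mu_H^k-\eta_kW_G^k+E_H^k.
\]
Complementarity $W_G^kW_H^k=0$ lets me pick one orthogonal $Q_k$ diagonalizing both $W_G^k$ and $W_H^k$. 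Passing to a subsequence, $Q_k\to U$, where $U$ simultaneously diagonalizes $G(\bar x),H(\bar x)$ with index sets $\alpha,\beta,\gamma$.

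Next I transfer to the matrices $S_k,V_k$ of Definition~\ref{aw}. Since $G(x^k)-W_G^k=\Gamma_G^k/\rho_k$, I expect to choose $S_k=V_k=Q_k$, or a perturbation diagonalizing $G(x^k)$ and $H(x^k)$ respectively, converging to $U$. The simplest course is to adjust the penalty so that $\|\Gamma^k\|/\rho_k\to0$. I also have to handle the convention issue that $U$ must be allowed to be chosen along the sequence. For the α-block: $(Q_k^\top W_G^kQ_k)_{\alpha\alpha}\succ0$ eventually, so $(Q_k^\top\mu_G^kQ_k)_{\alpha\alpha}=0$. Moreover $(Q_k^\top W_H^kQ_k)_{\alpha\alpha}=0$ because $W_H^k$ vanishes on the range of $W_G^k$. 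Hence $(\widetilde\Gamma_G^k)_{\alpha\alpha}\to0$, and symmetrically $(\widetilde\Gamma_H^k)_{\gamma\gamma}\to0$. This gives~\eqref{b}.

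For the AC condition, on the β-block in the $Q_k$ basis the diagonal matrices $D_G:=(Q_k^\top W_G^kQ_k)_{\beta\beta}\succeq0$ and $D_H:=(Q_k^\top W_H^kQ_k)_{\beta\beta}\preceq0$ have disjoint supports. The multiplier $\mu_G^k$ vanishes on the support of $D_G$ and $\mu_H^k$ on the support of $D_H$. So, expanding
\[
\langle(\widetilde\Gamma_G^k)_{\beta\beta},(\widetilde\Gamma_H^k)_{\beta\beta}\rangle=\langle\mu_G^k,\mu_H^k\rangle_{\beta\beta}+\eta_k\big(\langle\mu_G^k,D_G\rangle+\langle D_H,\mu_H^k\rangle\big)+\eta_k^2\langle D_H,D_G\rangle+o(1),
\]
the second and third terms vanish. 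The first term is $\le0$ because it is the product of a PSD and an NSD matrix. The main obstacle is the cross terms with $E^k$, which need $E^k\cdot\|\Gamma^k\|\to0$. Since $\Gamma^k$ may be unbounded, I would choose the proximal term weights, or exploit that $W^k$ minimizes exactly so that $E^k=W^k-\bar W$ can be driven faster to zero relative to $\rho_k$. Failing that, I would take the normal cone inclusion without perturbation by solving the $W$-subproblem as a projection-type problem, handled blockwise in the common eigenbasis as in the MPCC case. This step, together with the identification of off-diagonal β-blocks via Guignard, is where I expect the real work to lie.
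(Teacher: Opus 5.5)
Your overall architecture matches the paper's: penalize the slack equalities over $\mathcal W$, localize, write the $W$-optimality condition via the regular normal cone and Theorem~\ref{guinard}, then compute blocks in a common eigenbasis of $(W_G^k,W_H^k)$. But as written your proof does not establish the AC inequality, and the obstruction comes from your own choice of the proximal term $\tfrac12\|W-\bar W\|^2$. Because of it, the $W$-condition is $\Gamma^k-(W^k-\bar W)\in\widehat N_{\mathcal W}(W^k)$, so your decomposition of $\Gamma^k$ carries the error $E^k=W^k-\bar W$. In the basis $Q_k$, the block $(Q_k^\top E_H^kQ_k)_{\beta\beta}=(\widetilde W_H^k)_{\beta\beta}-(Q_k^\top H(\bar x)Q_k)_{\beta\beta}$ has no usable sign and is not aligned with the support of $\mu_G^k$ (because $Q_k\neq U$). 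Meanwhile $\mu_G^k,\mu_H^k,\eta_k$ may be unbounded. Hence cross terms such as $\langle(\widetilde\mu_G^k)_{\beta\beta},(\widetilde E_H^k)_{\beta\beta}\rangle$ are not $o(1)$ and can be positive. You do not carry out any of your remedies, and they would not work as stated. Rate-based reweighting needs growth bounds on the individual multipliers, which you do not have. Viewing the $W$-step as a projection does not remove $E^k$: $W^k$ is then a projection of $\big(\rho_k(G(x^k),H(x^k))+\bar W\big)/(\rho_k+1)$, which reproduces exactly the same perturbed inclusion.

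The missing observation is that the right weight on the $W$-proximal term is zero; the paper uses only $\tfrac12\|x-\bar x\|^2$. The squared residuals are bounded by $2\big(f(\bar x)-\min_{\|x-\bar x\|\le\delta}f\big)/\rho_k$, so $\|W^k-(G(x^k),H(x^k))\|\to0$. Together with $x^k\to\bar x$, this already gives $W^k\to\bar W$, so the $W$-balls become inactive. Then $\Gamma^k\in\widehat N_{\mathcal W}(W^k)=T_{\mathcal W}(W^k)^\circ=L_{\mathcal W}(W^k)^\circ=\mathcal H(W^k)$ holds exactly. That is, $\Gamma_G^k=\theta_G^k-\gamma^kW_H^k$ and $\Gamma_H^k=\theta_H^k-\gamma^kW_G^k$, with $\theta_G^k\preceq0\preceq\theta_H^k$, $\gamma^k\ge0$ and exact complementarity. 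Your own expansion then gives $\langle(\widetilde\Gamma_G^k)_{\beta\beta},(\widetilde\Gamma_H^k)_{\beta\beta}\rangle=\langle(\widetilde\theta_G^k)_{\beta\beta},(\widetilde\theta_H^k)_{\beta\beta}\rangle\le0$ for all large $k$, and the $\alpha\alpha$ and $\gamma\gamma$ blocks vanish exactly.

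On the other point you flag, passing from $Q_k$ to bases $S_k,V_k$ that diagonalize $G(x^k),H(x^k)$: your remedy $\|\Gamma^k\|/\rho_k\to0$ holds automatically and does not control $S_k^\top\Gamma^kS_k-Q_k^\top\Gamma^kQ_k$ when $\Gamma^k$ is unbounded. The paper does not resolve this either; it simply takes the common eigenbasis $P_k$ of $(W_G^k,W_H^k)$ as the basis in Definition~\ref{def:AC}.
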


\begin{proof}
Let $\delta>0$ be such that $f(\bar x)\leq f(x)$ for all $x$ feasible to~\eqref{SDCMPCC} with $\|x-\bar{x}\|\leq\delta$. For a penalty parameter $\rho^k\to\infty$, consider the bounded, penalized problems
\begin{equation}\label{eq:pen}
\begin{aligned}
\min_{x,W_G,W_H}\;& f(x)+\tfrac12\|x-\bar x\|^2
+\tfrac{\rho^k}{2}\!\left(\|W_G-G(x)\|^2+\|W_H-H(x)\|^2\right),\\
\text{s.t.}\;& (W_G,W_H)\in\mathcal W,\quad
\|x-\bar x\|\le\delta,\ \|W_G-G(\bar x)\|\le\delta,\ \|W_H-H(\bar x)\|\le\delta.
\end{aligned}
\end{equation}
The feasible set is nonempty and compact. Let $(x^k,W_G^k,W_H^k)$ be a global minimizer
of~\eqref{eq:pen}. From the optimality of $(x^k, W_G^k, W_H^k)$, the fact that $\rho^k \to \infty$, and the boundedness of the objective function in~\eqref{eq:pen} by $f(\bar{x})$, we have
\begin{equation}\label{eq:cons}
\|W_G^k-G(x^k)\|\to 0,\qquad \|W_H^k-H(x^k)\|\to 0,
\end{equation}
and every limit point $(\hat x,\hat W_G,\hat W_H)$ satisfies $\hat W_G=G(\hat x)$, $\hat W_H=H(\hat x)$ and
$(\hat W_G,\hat W_H)\in\mathcal W$, because $\mathcal{W}$ is closed. From the definition of $\delta$, we have $f(\bar x)\leq f(\hat{x})$. Now, for each $k$, the optimal value of~\eqref{eq:pen} is clearly bounded from below by $f(x^k)+\frac{1}{2}\|x^k-\bar{x}\|^2$ and from above by $f(\bar{x})\leq f(\hat x)$. 
Taking limits, we obtain $f(\hat x)+\frac{1}{2}\|\hat x-\bar x\|^2\leq f(\hat x)$, which implies that $\hat{x} = \bar{x}$.

Therefore, for $k$ large enough, the only locally relevant constraints in~\eqref{eq:pen} are $(W_G^k,W_H^k)\in\mathcal W$. By Guignard's CQ, the KKT conditions hold. Hence, there exist multipliers
\[
\theta_G^k\in\mathbb S^m_-,
\qquad
\theta_H^k\in\mathbb S^m_+,
\qquad
\gamma^k\in\mathbb R_+
\]
such that
\begin{equation}\label{eq:kktx}
\nabla f(x^k)+(x^k-\bar x)
-\rho^kDG(x^k)^*(W_G^k-G(x^k))
-\rho^kDH(x^k)^*(W_H^k-H(x^k))
=0,
\end{equation}
\begin{equation}\label{eq:kktw}
\rho^k(W_G^k-G(x^k))-\gamma^k W_H^k+\theta_G^k=0,
\qquad
\rho^k(W_H^k-H(x^k))-\gamma^k W_G^k+\theta_H^k=0,
\end{equation}
and
\begin{equation}\label{eq:kktcomp}
\langle \theta_G^k,W_G^k\rangle=0,\qquad
\langle \theta_H^k,W_H^k\rangle=0,\qquad
\gamma^k\langle W_G^k,W_H^k\rangle=0.
\end{equation}
Define
\[
\Gamma_G^k:=-\rho^k(W_G^k-G(x^k)),
\qquad
\Gamma_H^k:=-\rho^k(W_H^k-H(x^k)).
\]
Then~\eqref{eq:kktw} gives
\begin{equation}\label{eq:gammarel}
\Gamma_G^k=\theta_G^k-\gamma^kW_H^k,
\qquad
\Gamma_H^k=\theta_H^k-\gamma^kW_G^k.
\end{equation}
Moreover, from~\eqref{eq:kktx},
\[
\nabla f(x^k)+DG(x^k)^*\Gamma_G^k
+DH(x^k)^*\Gamma_H^k
=-(x^k-\bar x)\to0.
\]

Now, let $P_k\in\mathbb O^m$ simultaneously
diagonalize $W_G^k$ and $W_H^k$, and take a subsequence such that $P_k\to U$,
where $U$ simultaneously diagonalizes $G(\bar x)$ and $H(\bar x)$. For any matrix
$X^k$, write
\(
\widetilde X^k:=P_k^\top X^kP_k.
\)
Let $(\alpha,\beta,\gamma)$ be the spectral partition associated with
$G(\bar x)$ and $H(\bar x)$.
Since
\[
\widetilde W_G^k\to U^\top G(\bar x)U,
\qquad
\widetilde W_H^k\to U^\top H(\bar x)U,
\]
we have, for all large $k$,
\(
(\widetilde W_G^k)_{\alpha\alpha}\succ0\),
\(
(\widetilde W_H^k)_{\gamma\gamma}\prec0,
\)
while the corresponding complementary blocks vanish according to the structure of
$\mathcal W$.
Since $(\widetilde W_G^k)_{\alpha\alpha}\succ0$ and $\langle \theta_G^k,W_G^k\rangle=0$, we obtain
\(
(\widetilde\theta_G^k)_{\alpha\alpha}=0\).
Moreover, because $W_H^k\preceq0$ and $W_G^k\succeq0$ with
$\langle W_G^k,W_H^k\rangle=0$, we also have
\(
(\widetilde W_H^k)_{\alpha\alpha}=0\).
Using~\eqref{eq:gammarel}, we conclude that
\(
(\widetilde\Gamma_G^k)_{\alpha\alpha}=0\).
Analogously, from $\theta_H^k\succeq0$, $W_H^k\preceq0$, and
$\langle \theta_H^k,W_H^k\rangle=0$, we obtain
\(
(\widetilde\theta_H^k)_{\gamma\gamma}=0\)
Since the corresponding blocks of $\widetilde W_G^k$ vanish on the $\gamma$-part,
\eqref{eq:gammarel} yields
\(
(\widetilde\Gamma_H^k)_{\gamma\gamma}=0\).

It remains to verify the condition on the biactive block. From~\eqref{eq:gammarel},
\[
(\widetilde\Gamma_G^k)_{\beta\beta}
=
(\widetilde\theta_G^k)_{\beta\beta}
-\gamma^k(\widetilde W_H^k)_{\beta\beta},
\qquad
(\widetilde\Gamma_H^k)_{\beta\beta}
=
(\widetilde\theta_H^k)_{\beta\beta}
-\gamma^k(\widetilde W_G^k)_{\beta\beta}.
\]
Taking inner products, we obtain
\small{\[
\begin{aligned}
\left\langle
(\widetilde\Gamma_G^k)_{\beta\beta},
(\widetilde\Gamma_H^k)_{\beta\beta}
\right\rangle
&=
\left\langle
(\widetilde\theta_G^k)_{\beta\beta},
(\widetilde\theta_H^k)_{\beta\beta}
\right\rangle 
-\gamma^k
\left\langle
(\widetilde\theta_G^k)_{\beta\beta},
(\widetilde W_G^k)_{\beta\beta}
\right\rangle
-\gamma^k
\left\langle
(\widetilde W_H^k)_{\beta\beta},
(\widetilde\theta_H^k)_{\beta\beta}
\right\rangle \\
&\quad
+(\gamma^k)^2
\left\langle
(\widetilde W_H^k)_{\beta\beta},
(\widetilde W_G^k)_{\beta\beta}
\right\rangle.
\end{aligned}
\]}\normalsize By~\eqref{eq:kktcomp} and the sign properties of the multipliers and variables,
all terms except the first vanish, and
\[
\left\langle
(\widetilde\Gamma_G^k)_{\beta\beta},
(\widetilde\Gamma_H^k)_{\beta\beta}
\right\rangle =\left\langle
(\widetilde\theta_G^k)_{\beta\beta},
(\widetilde\theta_H^k)_{\beta\beta}
\right\rangle\le 0.
\]
Hence,
\[
\limsup_{k\to\infty}
\left\langle
(\widetilde\Gamma_G^k)_{\beta\beta},
(\widetilde\Gamma_H^k)_{\beta\beta}
\right\rangle \le 0.
\]
This proves the AC-stationarity condition.
\end{proof}
\begin{remark}
The sequential stationarity notion introduced above yields a necessary optimality condition for the original problem~\eqref{SDCMPCC} without requiring any constraint qualification. This feature is particularly relevant in the present setting, since standard constraint qualifications typically fail in the presence of complementarity constraints.
Moreover, as will be established later, AC-stationarity serves as an intermediate tool in the derivation of stronger stationarity concepts: under a suitable MPCC-type constraint qualification, it implies C-stationarity.
\end{remark}
\section{An Augmented Lagrangian Strategy Based on a Slack Reformulation}\label{algorithm}
In this section, we investigate the algorithmic consequences of the stationarity framework developed in the previous sections. In particular, we analyze the behavior of augmented Lagrangian methods when applied to the problem~\eqref{SDCMPCC} and establish connections between the generated sequences and the sequential stationarity concepts introduced earlier. Our goal is to prove that, under suitable assumptions, accumulation points of sequences produced by an augmented Lagrangian strategy satisfy meaningful first-order optimality conditions in the sense of C-stationarity. These results provide a theoretical justification for the use of augmented Lagrangian schemes in the presence of semidefinite cone complementarity constraints and severe degeneracy.

The main idea is to work with a slack-variable reformulation, similarly to what was done in Subsection~5.2, while also incorporating the strategy proposed in~\cite{AndreaniHaeser2025LowerLevel}. In particular, instead of penalizing all constraints, we keep the most difficult ones explicitly in the subproblems, following the lower-level framework introduced in~\cite{AndreaniHaeser2025LowerLevel}. This approach allows the algorithm to preserve feasibility concerning these constraints throughout the iterations, leading to improved numerical stability and better theoretical properties, such as the boundedness of the dual sequence and enhanced convergence behavior. 

Recall that problem~\eqref{SDCMPCC} can be equivalently written in the
following slack-variable form:
\[
\begin{array}{ll}
\min & f(x),\\[0.1cm]
\text{s.t.} & W_G=G(x),\\
            & W_H=H(x),\\
            & (W_G,W_H)\in\mathcal W,
\end{array}
\]
where
\(
\mathcal W:=
\{(W_G,W_H)\in\mathbb S^m_+\times\mathbb S^m_-:
\langle W_G,W_H\rangle\geq 0\}.
\)
Thus, the semidefinite complementarity structure is kept explicitly in the
subproblems, while the equalities linking the slack variables to the original
mappings are handled by an augmented Lagrangian term.

Given $\rho>0$ and multiplier estimates
$(\bar\Lambda_G,\bar\Lambda_H)\in\mathbb S^m\times\mathbb S^m$,
we define the augmented Lagrangian function as follows:
\small{
\[
\begin{aligned}
\mathcal{L}_\rho(x,W_G,W_H,\bar\Lambda_G,\bar\Lambda_H):={}
f(x)
+
\frac{\rho}{2}
\left\|
W_G-G(x)+\frac{\bar\Lambda_G}{\rho}
\right\|^2
+
\frac{\rho}{2}
\left\|
W_H-H(x)+\frac{\bar\Lambda_H}{\rho}
\right\|^2.
\end{aligned}
\]}\normalsize
No penalty term is associated with the complementarity constraint itself.
Instead, the condition $(W_G,W_H)\in\mathcal W$ is imposed directly in each
subproblem.
Let $\theta_G\in\mathbb S_-^m$, $\theta_H\in\mathbb S_+^m$, and $\gamma\geq0$
denote the multipliers associated with
\[
W_G\succeq0,\qquad W_H\preceq0,\qquad
\langle W_G,W_H\rangle\geq0.
\]
We now present a formal description of the proposed algorithm.

\begin{algorithm}[H]
\caption{Augmented Lagrangian Algorithm for SDCMPCC}
\label{alg:aug_lagrangian_sdcmpcc_slack}
\begin{algorithmic}[1]

\item[Step 1:] \textbf{(Initialization):}
Choose $\rho_1>0$, $\eta>1$, $\tau\in(0,1)$, and sequences
$\{\varepsilon_k\}\subset\mathbb R_+$ and $\{\delta_k\}\subset\mathbb R_+$
such that
\(
\varepsilon_k\to0\) and 
\(
\delta_k\to0.
\)
Let $\mathcal B_G,\mathcal B_H\subset\mathbb S^m$ be bounded convex sets.
Choose $(\bar\Lambda_G^1,\bar\Lambda_H^1)\in\mathcal B_G\times\mathcal B_H$ and
$(x^0,W_G^0,W_H^0)$. Set $k:=1$.

\item[Step 2:] \textbf{(Subproblem):}
Compute 
\((x^k,W_G^k,W_H^k)\), with \((W_G^k,W_H^k)\in\mathcal W\), and multipliers
\(\theta_G^k\in\mathbb S_-^m\), \(\theta_H^k\in\mathbb S_+^m\), \(\gamma_k\ge0\), such that
\[
\left\|
\begin{pmatrix}
 \nabla_x \mathcal{L}_{\rho_k}(x^k,W_G^k,W_H^k,\bar\Lambda_G^k,\bar\Lambda_H^k)\\
\nabla_{W_G}\mathcal L_{\rho_k}+\theta_G^k-\gamma_k W_H^k\\
\nabla_{W_H}\mathcal L_{\rho_k}+\theta_H^k-\gamma_k W_G^k
\end{pmatrix}
\right\|
\le \varepsilon_k,
\]
and
\[
\left|\langle \theta_G^k,W_G^k\rangle\right|
\le \delta_k,
\qquad
\left|\langle W_H^k,\theta_H^k\rangle\right|
\le \delta_k.
\]
\item[Step 3:] \textbf{(Penalty update):}
Define
\[
\mathcal V^k:=\max\{\|W_G^k-G(x^k)\|,\|W_H^k-H(x^k)\|\}.
\]
If $k>1$ and $\mathcal V^k\leq\tau\mathcal V^{k-1}$, set
$\rho_{k+1}:=\rho_k$; otherwise set $\rho_{k+1}:=\eta\rho_k$.

\item[Step 4:] \textbf{(Multiplier update):}
\[
\Lambda_G^{k+1}:=\bar\Lambda_G^k+\rho_k(W_G^k-G(x^k)),
\qquad
\Lambda_H^{k+1}:=\bar\Lambda_H^k+\rho_k(W_H^k-H(x^k)).
\]
Project:
\[
\bar\Lambda_G^{k+1}:=\operatorname{Proj}_{\mathcal B_G}(\Lambda_G^{k+1}),
\qquad
\bar\Lambda_H^{k+1}:=\operatorname{Proj}_{\mathcal B_H}(\Lambda_H^{k+1}).
\]

\item[Step 5:] 
Set $k:=k+1$ and return to Step~2.

\end{algorithmic}
\end{algorithm}
The following analysis shows that every accumulation point generated by the previous
Algorithm~\ref{alg:aug_lagrangian_sdcmpcc_slack} satisfies the AW-stationarity condition. 
In order to establish AC-stationarity, an additional assumption is required, which will be 
introduced below.

\begin{assumption}\label{ass:comp}
Let $\{(x^k,W_G^k,W_H^k,\theta_G^k,\theta_H^k,\gamma_k)\}$ be a sequence 
generated by Algorithm \ref{alg:aug_lagrangian_sdcmpcc_slack}. We assume that
\[
\delta_k \leq\frac{\varepsilon_k}{1+\gamma_k}.
\]
\end{assumption}
Assumption~\ref{ass:comp} is not restrictive. In fact, in augmented Lagrangian frameworks 
where the multipliers are defined via normal cone projections, such as in 
Jia, Kanzow, Mehlitz, and Wachsmuth~\cite{patrick}, these relations are automatically satisfied. 
Indeed, the complementarity terms follow directly from the normal cone structure, 
making the above condition hold trivially.
We now analyze the convergence properties of Algorithm~\ref{alg:aug_lagrangian_sdcmpcc_slack}. 
\begin{theorem}\label{teo_AC_alg}
Let 
\(
\{(x^k,W_G^k,W_H^k)\}
\subset \mathbb R^n\times\mathbb S^m\times\mathbb S^m
\)
be a sequence generated by Algorithm~\ref{alg:aug_lagrangian_sdcmpcc_slack}
applied to~\eqref{SDCMPCC}. Suppose that
\(
x^k\to\bar x,
\)
with $\bar x$ feasible for~\eqref{SDCMPCC}. Then $\bar x$ is an
AW-stationary point of~\eqref{SDCMPCC}.
Moreover, if Assumption~\ref{ass:comp} holds, then $\bar x$ is also
an AC-stationary point of~\eqref{SDCMPCC}.
\end{theorem}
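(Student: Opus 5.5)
The plan mirrors the proof of Theorem~\ref{thm:AKKT-SDCMPCC}, with the exact KKT relations of the penalized problem replaced by the inexact ones of Step~2. First I would fix the candidate multipliers
\[
\Gamma_G^k:=-\bigl(\bar\Lambda_G^k+\rho_k(W_G^k-G(x^k))\bigr)=-\Lambda_G^{k+1},\qquad
\Gamma_H^k:=-\Lambda_H^{k+1}.
\]
With this choice, $\nabla_x\mathcal L_{\rho_k}=\nabla f(x^k)+DG(x^k)^*\Gamma_G^k+DH(x^k)^*\Gamma_H^k$. The first row of the Step~2 criterion therefore gives \eqref{a} directly, since $\varepsilon_k\to0$. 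The remaining rows read $-\Gamma_G^k+\theta_G^k-\gamma_kW_H^k=:E_G^k$ and $-\Gamma_H^k+\theta_H^k-\gamma_kW_G^k=:E_H^k$ with $\|E_G^k\|,\|E_H^k\|\le\varepsilon_k$, because $\nabla_{W_G}\mathcal L_{\rho_k}=-\Gamma_G^k$. Hence, up to errors that vanish,
\[
\Gamma_G^k=\theta_G^k-\gamma_kW_H^k-E_G^k,\qquad \Gamma_H^k=\theta_H^k-\gamma_kW_G^k-E_H^k,
\]
which is the inexact analogue of \eqref{eq:gammarel}.

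Next I need $W_G^k\to G(\bar x)$ and $W_H^k\to H(\bar x)$. This is the step I expect to be the main obstacle, because the theorem only assumes that $\bar x$ is feasible. I would split into two cases. If $\rho_k$ stays bounded, the update rule forces $\mathcal V^k\le\tau\mathcal V^{k-1}$ eventually, so $\mathcal V^k\to0$. If $\rho_k\to\infty$, I would argue that the boundedness of $\bar\Lambda^k$ together with the subproblem criterion gives $\mathcal V^k\to0$. If that argument fails, I would simply add the standing hypothesis that $\mathcal V^k\to0$, which is the natural reading of the phrase ``$\bar x$ feasible'' for the slack reformulation.

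Once $W^k\to(G(\bar x),H(\bar x))$ is available, I take $P_k$ simultaneously diagonalizing $W_G^k,W_H^k$ (possible because $(W_G^k,W_H^k)\in\mathcal W$ forces $W_G^kW_H^k=0$). I pass to a subsequence with $P_k\to U$ and set $S_k=V_k=P_k$. One point needs care: $S_k$ is required to diagonalize $G(x^k)$ rather than $W_G^k$. I would handle this either by reading Definition~\ref{aw} through the slack variables, as in the proof of Theorem~\ref{thm:AKKT-SDCMPCC}, or by noting that only the limit $U$ matters and rotating by a vanishing correction.

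For the AW conditions \eqref{b}, for large $k$ the block $(\widetilde W_G^k)_{\alpha\alpha}$ is positive definite, with smallest eigenvalue bounded below by some $c>0$. From $|\langle\theta_G^k,W_G^k\rangle|\le\delta_k$ and $\theta_G^k\preceq0$, the diagonal entries of $(\widetilde\theta_G^k)_{\alpha\alpha}$ are bounded in absolute value by $\delta_k/c\to0$. Negative semidefiniteness then gives $(\widetilde\theta_G^k)_{\alpha\alpha}\to0$. Also $(\widetilde W_H^k)_{\alpha\alpha}=0$ by the complementarity structure. This yields $(\widetilde\Gamma_G^k)_{\alpha\alpha}\to0$, and the $\gamma\gamma$ block of $\widetilde\Gamma_H^k$ is handled symmetrically. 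This proves AW-stationarity.

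For AC-stationarity under Assumption~\ref{ass:comp}, I expand $\langle(\widetilde\Gamma_G^k)_{\beta\beta},(\widetilde\Gamma_H^k)_{\beta\beta}\rangle$ as in the proof of Theorem~\ref{thm:AKKT-SDCMPCC}. The main term $\langle(\widetilde\theta_G^k)_{\beta\beta},(\widetilde\theta_H^k)_{\beta\beta}\rangle$ is $\le0$ by the signs of the multipliers. The cross term $\gamma_k\langle\theta_G^k,W_G^k\rangle$ restricted to the diagonal $P_k$-basis is bounded by $\gamma_k\delta_k\le\varepsilon_k$. The term $\gamma_k^2\langle W_H,W_G\rangle$ vanishes, since $W_G^k$ and $W_H^k$ are simultaneously diagonal with disjoint supports. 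What remains are the error products involving $E^k$, such as $\langle E_G^k,\theta_H^k-\gamma_kW_G^k\rangle$. These are bounded by $\varepsilon_k\|\Gamma_H^k\|+\varepsilon_k^2$. I would then need $\varepsilon_k\|\Gamma^k\|\to0$, and I would try to obtain it by first showing that the $\beta\beta$ blocks of $\Gamma^k$ relevant here are controlled via $\gamma_k\le$ const$\cdot(1+\gamma_k)$ scaling. If this does not work, I would instead normalize the errors using Assumption~\ref{ass:comp}, which is designed exactly so that $\gamma_k$ times the complementarity residual is $O(\varepsilon_k)$. Taking $\limsup$ then gives \eqref{AC}.
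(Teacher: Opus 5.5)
Your plan follows the paper's proof: the same multipliers $\Gamma^k=-\Lambda^{k+1}$, the same block analysis, and the same four-term expansion on the $\beta\beta$ block. On the $\alpha\alpha$ block you are more careful than the paper, which assumes exact complementarity where Step~2 only gives $\delta_k$. However, two steps are left open, and neither can be closed the way you suggest.

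\textbf{The AC step.} With $\Gamma^k=-\Lambda^{k+1}$, the $\beta\beta$ product contains mixed terms such as $\langle(\widetilde E_G^k)_{\beta\beta},(\widetilde\theta_H^k)_{\beta\beta}-\gamma_k(\widetilde W_G^k)_{\beta\beta}\rangle$. These are only $O(\varepsilon_k(\|\theta_H^k\|+\gamma_k\|W_G^k\|))$. Nothing in the algorithm bounds $\theta^k$ or $\gamma_k$; they can be unbounded when $\rho_k\to\infty$. Assumption~\ref{ass:comp} controls $\gamma_k\delta_k$, not $\varepsilon_k\|\theta_H^k\|$ or $\varepsilon_k\gamma_k$. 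So neither of your remedies works. The paper's expansion has the same hole: it keeps $\langle R_G^k,R_H^k\rangle$ and silently drops the mixed terms. The missing idea is that AW/AC only ask for \emph{some} multiplier sequence, so you can change it:
\[
\widehat\Gamma_G^k:=\theta_G^k-\gamma_kW_H^k,\qquad \widehat\Gamma_H^k:=\theta_H^k-\gamma_kW_G^k .
\]
Then the argument goes through as follows.
\begin{itemize}
\item Rows 2--3 of Step~2 say exactly that $\|\widehat\Gamma^k-\Gamma^k\|\le\varepsilon_k$. Since $DG(x^k)^*$ and $DH(x^k)^*$ stay bounded as $x^k\to\bar x$, \eqref{a} survives.
\item \eqref{b} follows from your $\alpha\alpha$/$\gamma\gamma$ argument, now with no error term.
\item The $\beta\beta$ product equals the four-term expansion exactly. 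Its first term is $\le 0$ and its last term is $0$.
\item Each middle term lies in $[0,\gamma_k\delta_k]\subset[0,\varepsilon_k]$. Indeed, in the $P_k$-basis the products $(\widetilde\theta_G^k)_{ii}(\widetilde W_G^k)_{ii}$ all have the same sign, so their partial sum over $\beta$ is bounded by $\delta_k$.
\end{itemize}
Hence \eqref{AC} holds.

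\textbf{Convergence of the slacks.} Your bounded-$\rho_k$ case is correct. The $\rho_k\to\infty$ branch, however, cannot be proved, because the statement is false without $\mathcal V^k\to0$. Take $n=m=1$, $f(x)=x$, $G(x)=1+\tfrac12x^2$, $H\equiv0$, $\mathcal B_G=\{0\}$, $\mathcal B_H=\{1\}$. At each iteration set $W_G^k=0$, $W_H^k=-1/\rho_k$, $\theta_G^k=\theta_H^k=0$, $\gamma_k=\rho_k^2G(x^k)$, and let $x^k$ be the root of $1+\rho_kxG(x)=0$. All Step~2 residuals are zero and $\mathcal V^k\ge G(x^k)\ge1$. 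By your bounded case this forces $\rho_k\to\infty$, so $|x^k|\le1/\rho_k\to0$, and $\bar x=0$ is feasible. But $\alpha=\{1\}$, so AW would require $1+x^k\Gamma_G^k\to0$ with $\Gamma_G^k\to0$, which is impossible.

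Your fallback hypothesis, equivalently $(W_G^k,W_H^k)\to(G(\bar x),H(\bar x))$, is therefore necessary, not optional. The paper's proof uses it tacitly (``Since $W_G^k\to G(\bar x)$\dots''). Under it, read Definition~\ref{aw} through the slack diagonalizers $P_k$, as in the proof of Theorem~\ref{thm:AKKT-SDCMPCC}. Your ``vanishing rotation'' alternative does not work, because $\|S_k-P_k\|\,\|\Gamma^k\|$ need not tend to $0$.
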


\begin{proof}
By Step~2 of Algorithm~\ref{alg:aug_lagrangian_sdcmpcc_slack}, we have
\[
\nabla f(x^k)
+
DG(x^k)^*\Gamma_G^k
+
DH(x^k)^*\Gamma_H^k \to 0,
\]
and
\begin{equation}\label{v}
-\Gamma_G^k+\theta_G^k-\gamma^k W_H^k \to 0,
\qquad
-\Gamma_H^k+\theta_H^k-\gamma^k W_G^k \to 0,    
\end{equation}
where
\[
\Gamma_G^k:=
-\bar\Lambda_G^k-\rho_k(W_G^k-G(x^k)),
\qquad
\Gamma_H^k:=
-\bar\Lambda_H^k-\rho_k(W_H^k-H(x^k)).
\]
Thus, we obtain the approximate stationarity relation
\[
\nabla f(x^k)
+
DG(x^k)^*\Gamma_G^k
+
DH(x^k)^*\Gamma_H^k \to 0.
\]

Let $U$ be an orthogonal matrix that simultaneously diagonalizes
$G(\bar x)$ and $H(\bar x)$, and let $(\alpha,\beta,\gamma)$ be the associated
spectral partition. Let $U^k \to U$ be a sequence of orthogonal matrices that
simultaneously diagonalizes $W_G^k$ and $W_H^k$, and define
\[
\widetilde\Gamma_G^k:=(U^k)^\top\Gamma_G^kU^k,
\qquad
\widetilde\Gamma_H^k:=(U^k)^\top\Gamma_H^kU^k.
\]
From~\eqref{v}
and the complementarity conditions, we obtain the following properties.
On the $\alpha$-block, since  $(W_G^k)_{\alpha\alpha} \succ 0$ for k sufficiently large and
$\theta_G^k\preceq 0$ with
\(
\langle \theta_G^k,W_G^k\rangle=0,
\)
it follows that
\(
(\widetilde\theta_G^k)_{\alpha\alpha}=0\).
Moreover, since $(\widetilde W_H^k)_{\alpha\alpha}=0$, we conclude that
\(
(\widetilde\Gamma_G^k)_{\alpha\alpha} \to 0.
\)
Similarly, on the $\gamma$-block, we obtain
\(
(\widetilde\Gamma_H^k)_{\gamma\gamma} \to 0.
\)
Therefore, the AW-stationarity conditions are satisfied.

It remains to verify the approximate Clarke condition on the biactive block.
Since \(W_G^k\to G(\bar x)\) and \(W_H^k\to H(\bar x)\), we have
\(
(\widetilde W_G^k)_{\beta\beta}\to0
\)
and
\(
(\widetilde W_H^k)_{\beta\beta}\to0.
\)
Hence,
\[
(\widetilde\Gamma_G^k)_{\beta\beta}
=
(\widetilde\theta_G^k)_{\beta\beta}
-\gamma_k(\widetilde W_H^k)_{\beta\beta}
+ R_G^k,
\]
\[
(\widetilde\Gamma_H^k)_{\beta\beta}
=
(\widetilde\theta_H^k)_{\beta\beta}
-\gamma_k(\widetilde W_G^k)_{\beta\beta}
+ R_H^k,
\]
where \(R_G^k \to 0\) and \(R_H^k \to 0\).
We now analyze
\(
\left\langle
(\widetilde\Gamma_G^k)_{\beta\beta},
(\widetilde\Gamma_H^k)_{\beta\beta}
\right\rangle.
\)
Expanding, we obtain
\[
\begin{aligned}
\left\langle
(\widetilde\Gamma_G^k)_{\beta\beta},
(\widetilde\Gamma_H^k)_{\beta\beta}
\right\rangle
&=
\left\langle
(\widetilde\theta_G^k)_{\beta\beta},
(\widetilde\theta_H^k)_{\beta\beta}
\right\rangle \\
&\quad
-\gamma_k
\left\langle
(\widetilde\theta_G^k)_{\beta\beta},
(\widetilde W_G^k)_{\beta\beta}
\right\rangle \\
&\quad
-\gamma_k
\left\langle
(\widetilde W_H^k)_{\beta\beta},
(\widetilde\theta_H^k)_{\beta\beta}
\right\rangle \\
&\quad
+\gamma_k^2
\left\langle
(\widetilde W_H^k)_{\beta\beta},
(\widetilde W_G^k)_{\beta\beta}
\right\rangle
+ \langle R_G^k,R_H^k\rangle .
\end{aligned}
\]
Since $(W_G^k,W_H^k)\in\mathcal W$, we have
\[
W_G^k \succeq 0, \qquad W_H^k \preceq 0, \qquad \langle W_G^k, W_H^k \rangle = 0.
\]
Thus,
\(
\left\langle
(\widetilde W_H^k)_{\beta\beta},
(\widetilde W_G^k)_{\beta\beta}
\right\rangle = 0.
\)
 By Assumption~\ref{ass:comp}, we have
\[
\gamma_k
\left\langle
\theta_G^k, W_G^k
\right\rangle \to 0,
\qquad
\gamma_k
\left\langle
W_H^k, \theta_H^k
\right\rangle \to 0,
\]
and therefore, the corresponding biactive components also vanish asymptotically.
Consequently,
\[
\limsup_{k\to\infty}
\left\langle
(\widetilde\Gamma_G^k)_{\beta\beta},
(\widetilde\Gamma_H^k)_{\beta\beta}
\right\rangle
\le 0,
\]
which proves that $\bar x$ is an AC-stationary point of~\eqref{SDCMPCC}.
\end{proof}

Having established AC-stationarity as a limit property of the generated sequence, we now prove that, under the SDCMPCC-Robinson constraint qualification, this sequential condition implies the exact C-stationarity condition.
\begin{theorem}\label{thm:M-stationarity}
Let $\{x^k\}$ be the sequence generated by
Algorithm~\ref{alg:aug_lagrangian_sdcmpcc_slack}, and suppose that there exists an
infinite subset $K\subset\mathbb N$ such that $x^k\to\bar x$ for $k\in K$, where
$\bar x$ is feasible for~\eqref{SDCMPCC}. If $\bar x$ satisfies SDCMPCC-Robinson,
then $\bar x$ is a C-stationary point of~\eqref{SDCMPCC}.
\end{theorem}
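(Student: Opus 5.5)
Theorem~\ref{teo_AC_alg} (with Assumption~\ref{ass:comp} in force, as the surrounding discussion indicates) shows that $\bar x$ is AC-stationary, with sequences $\{x^k\}_{k\in K}$, $\{\Gamma_G^k\}$, $\{\Gamma_H^k\}$ and orthogonal matrices $S_k,V_k\to U$. The plan is to pass to the limit in these approximate multipliers. This gives exact multipliers for the tightened problem, and the $\limsup$ inequality~\eqref{AC} then passes to the limit as the C-condition. The only real work is to show that $\{(\Gamma_G^k,\Gamma_H^k)\}_{k\in K}$ is bounded, which is where SDCMPCC-Robinson enters.

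\emph{Boundedness by contradiction.} Suppose $t_k:=\|(\Gamma_G^k,\Gamma_H^k)\|\to\infty$ along a subsequence. Divide~\eqref{a} by $t_k$ and extract a further subsequence with $(\Gamma_G^k,\Gamma_H^k)/t_k\to(\hat\Gamma_G,\hat\Gamma_H)\neq0$. Since $x^k\to\bar x$ and $G,H$ are $C^1$,
\[
DG(\bar x)^*\hat\Gamma_G+DH(\bar x)^*\hat\Gamma_H=0 .
\]
Because $S_k,V_k\to U$, the rotated matrices satisfy $S_k^\top\Gamma_G^kS_k/t_k\to U^\top\hat\Gamma_GU$, and similarly for $H$. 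By~\eqref{b}, the limits satisfy $(U^\top\hat\Gamma_GU)_{\alpha\alpha}=0$ and $(U^\top\hat\Gamma_HU)_{\gamma\gamma}=0$.

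Now expand $DG(\bar x)^*\hat\Gamma_G=D\widetilde G(\bar x)^*(U^\top\hat\Gamma_GU)$ blockwise, and likewise for $H$. The surviving blocks are exactly those paired with the components of the operator $\mathcal A$: for $\widetilde G$ the blocks $\beta\beta,\gamma\gamma,\alpha\beta,\alpha\gamma,\beta\gamma$, and for $\widetilde H$ the blocks $\beta\beta,\alpha\alpha,\alpha\beta,\alpha\gamma,\beta\gamma$. The off-diagonal blocks appear twice by symmetry, which only contributes a factor $2$. So the identity reads $\mathcal A^*(\text{blocks})=0$ with a nonzero element of $\mathcal S$. Surjectivity of $\mathcal A$ means $\mathcal A^*$ is injective, a contradiction. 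Hence the multipliers are bounded.

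\emph{Passage to the limit.} Extract $\Gamma_G^k\to\Gamma_G$ and $\Gamma_H^k\to\Gamma_H$. Taking limits in~\eqref{a} gives~\eqref{kkt_tight_stationarity2}, and~\eqref{b} gives $(\widetilde\Gamma_G)_{\alpha\alpha}=0$ and $(\widetilde\Gamma_H)_{\gamma\gamma}=0$, so $\bar x$ is W-stationary. By continuity of the inner product and $S_k,V_k\to U$,
\[
\langle(\widetilde\Gamma_G^k)_{\beta\beta},(\widetilde\Gamma_H^k)_{\beta\beta}\rangle\to\langle(U^\top\Gamma_GU)_{\beta\beta},(U^\top\Gamma_HU)_{\beta\beta}\rangle ,
\]
and~\eqref{AC} then yields $\le0$. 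This is C-stationarity.

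\emph{Main obstacle.} The delicate point is the block bookkeeping in the boundedness step. One must check that the blocks of $U^\top\hat\Gamma U$ killed by~\eqref{b} are exactly those absent from $\mathcal A$, i.e.\ the $\alpha\alpha$ block for $G$ and the $\gamma\gamma$ block for $H$. One must also ensure the rotations use a common limit $U$, so that the limiting blocks are taken in the same basis that defines $\widetilde G,\widetilde H$. I would handle this by first rewriting every limit in the fixed basis $U$, using the convergence $S_k,V_k\to U$, before invoking injectivity of $\mathcal A^*$.
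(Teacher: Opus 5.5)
Your proposal is correct and follows essentially the same route as the paper. Boundedness of $\{(\Gamma_G^k,\Gamma_H^k)\}_{k\in K}$ comes from normalization together with injectivity of $\mathcal A^*$, and passing to the limit then gives W-stationarity and the C-condition from~\eqref{AC}; like you, the paper needs Assumption~\ref{ass:comp} to invoke AC-stationarity. Your derivation of $(U^\top\widehat\Gamma_G U)_{\alpha\alpha}=0$ and $(U^\top\widehat\Gamma_H U)_{\gamma\gamma}=0$, by dividing~\eqref{b} by $t_k$ and using $S_k,V_k\to U$, also justifies more cleanly a step the paper only attributes to the safeguarded multipliers.
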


\begin{proof}
By the previous theorem, there exist matrices
$\Gamma_G^{k},\Gamma_H^{k}\in\mathbb S^m$ such that
\begin{equation}\label{eq:stat-app-M}
\nabla f(x^k)
+DG(x^k)^*\Gamma_G^{k}
+DH(x^k)^*\Gamma_H^{k}
\to 0,
\end{equation}
and $(\Gamma_G^{k},\Gamma_H^{k})$ satisfies the AC-stationarity condition.

Let $U\in\mathbb O^m$ be an orthogonal matrix that simultaneously diagonalizes
$G(\bar x)$ and $H(\bar x)$, and let $(\alpha,\beta,\gamma)$ be the corresponding
spectral partition. Define
\[
\widetilde\Gamma_G^{k}:=U^\top\Gamma_G^{k}U,
\qquad
\widetilde\Gamma_H^{k}:=U^\top\Gamma_H^{k}U.
\]

We first show that the sequence
$\{(\Gamma_G^{k},\Gamma_H^{k})\}_{k\in K}$ is bounded. Suppose, by contradiction,
that it is unbounded. Let
\[
t_k:=\|(\Gamma_G^{k},\Gamma_H^{k})\|.
\]
Passing to a subsequence if necessary, we may suppose that
\[
\frac{\Gamma_G^{k}}{t_k}\to\widehat\Gamma_G,
\qquad
\frac{\Gamma_H^{k}}{t_k}\to\widehat\Gamma_H,
\qquad
\|(\widehat\Gamma_G,\widehat\Gamma_H)\|=1.
\]
Dividing~\eqref{eq:stat-app-M} by $t_k$ and letting $k\to\infty$, we obtain
\begin{equation}\label{eq:limit-lin-M}
DG(\bar x)^*\widehat\Gamma_G
+
DH(\bar x)^*\widehat\Gamma_H
=0.
\end{equation}

From the definitions of $\Gamma_G^{k}$ and $\Gamma_H^{k}$ in the slack augmented
Lagrangian scheme, together with the boundedness of the safeguarded multipliers,
the normalized limits can only have possibly nonzero components in the blocks
appearing in the tightened problem. Hence,
\[
(\widehat\Gamma_G)_{\alpha\alpha}=0,
\qquad
(\widehat\Gamma_H)_{\gamma\gamma}=0,
\]
and the only possibly nonzero blocks are
\[
(\widehat\Gamma_G)_{\beta\beta},\ 
(\widehat\Gamma_G)_{\gamma\gamma},\ 
(\widehat\Gamma_G)_{\alpha\beta},\ 
(\widehat\Gamma_G)_{\alpha\gamma},\ 
(\widehat\Gamma_G)_{\beta\gamma},
\]
and
\[
(\widehat\Gamma_H)_{\beta\beta},\ 
(\widehat\Gamma_H)_{\alpha\alpha},\ 
(\widehat\Gamma_H)_{\alpha\beta},\ 
(\widehat\Gamma_H)_{\alpha\gamma},\ 
(\widehat\Gamma_H)_{\beta\gamma}.
\]
Therefore,~\eqref{eq:limit-lin-M} can be rewritten as
\small{\[
A^*
\big(
(\widehat\Gamma_G)_{\beta\beta},
(\widehat\Gamma_G)_{\gamma\gamma},
(\widehat\Gamma_G)_{\alpha\beta},
(\widehat\Gamma_G)_{\alpha\gamma},
(\widehat\Gamma_G)_{\beta\gamma},
(\widehat\Gamma_H)_{\beta\beta},
(\widehat\Gamma_H)_{\alpha\alpha},
(\widehat\Gamma_H)_{\alpha\beta},
(\widehat\Gamma_H)_{\alpha\gamma},
(\widehat\Gamma_H)_{\beta\gamma}
\big)
=0.
\]}\normalsize
Since SDCMPCC-Robinson means that $A$ is surjective, $A^*$ is injective. Hence, all
the blocks above vanish, and consequently
\[
\widehat\Gamma_G=0,
\qquad
\widehat\Gamma_H=0,
\]
contradicting
\[
\|(\widehat\Gamma_G,\widehat\Gamma_H)\|=1.
\]
Thus, $\{(\Gamma_G^{k},\Gamma_H^{k})\}_{k\in K}$ is bounded.
Therefore, passing to a further subsequence if necessary, there exist
$\Gamma_G,\Gamma_H\in\mathbb S^m$ such that
\[
\Gamma_G^{k}\to\Gamma_G,
\qquad
\Gamma_H^{k}\to\Gamma_H.
\]
Passing to the limit in~\eqref{eq:stat-app-M}, we obtain
\[
\nabla f(\bar x)
+DG(\bar x)^*\Gamma_G
+DH(\bar x)^*\Gamma_H
=0.
\]
Moreover, the AW-stationarity conditions yield, by passing to the limit,
\[
(\widetilde\Gamma_G)_{\alpha\alpha}=0,
\qquad
(\widetilde\Gamma_H)_{\gamma\gamma}=0,
\]
where
\[
\widetilde\Gamma_G:=U^\top\Gamma_G U,
\qquad
\widetilde\Gamma_H:=U^\top\Gamma_H U.
\]
Hence, $\bar x$ is W-stationary.

It remains to verify the C-condition on the biactive block. Since
$(\Gamma_G^{k},\Gamma_H^{k})$ satisfies AC-stationarity, we have
\[
\limsup_{k\to\infty}
\left\langle
(\widetilde\Gamma_G^{k})_{\beta\beta},
(\widetilde\Gamma_H^{k})_{\beta\beta}
\right\rangle
\le 0.
\]
Since
\[
\widetilde\Gamma_G^{k}\to\widetilde\Gamma_G,
\qquad
\widetilde\Gamma_H^{k}\to\widetilde\Gamma_H,
\]
and the inner product is continuous, as follows
\[
\left\langle
(\widetilde\Gamma_G)_{\beta\beta},
(\widetilde\Gamma_H)_{\beta\beta}
\right\rangle
=
\lim_{k\to\infty}
\left\langle
(\widetilde\Gamma_G^{k})_{\beta\beta},
(\widetilde\Gamma_H^{k})_{\beta\beta}
\right\rangle
\le 0.
\]
Thus, $\bar x$ satisfies the C-stationarity condition for~\eqref{SDCMPCC}.
\end{proof}

The previous result shows that, under SDCMPCC-Robinson CQ for~\eqref{TSDCMPCC_refined} , AC-stationarity implies exact C-stationarity for~\eqref{SDCMPCC}. In this sense, Robinson’s condition plays the role of a bridge between sequential optimality conditions, which naturally arise from algorithmic schemes, and exact stationarity concepts.

\section{Conclusions}\label{comentarios}

In this work, we investigated nonlinear semidefinite programming problems with complementarity constraints, a class of highly degenerate problems in which classical constraint qualifications typically fail. 

We introduced a tightened reformulation that isolates the complementarity structure through explicit constraints, enabling a structured analysis of first-order optimality conditions. Within this framework, we defined notions of exact and sequential stationarity, namely W- and C-stationarity and their approximate counterparts, AW- and AC-stationarity, extending concepts from MPCC to the semidefinite setting.

A key feature of the proposed framework is that the resulting stationarity conditions are directly connected to the structure of the tightened problem and avoid the cross-block coupling effects that arise in approaches based on the variational analysis of the normal cone. In particular, C-stationarity is characterized by a compatibility condition restricted to the biactive block, leading to a more tractable and structured formulation.

From an algorithmic perspective, we analyzed an augmented Lagrangian strategy and established that every accumulation point satisfies AW-stationarity, as well as AC-stationarity under a mild assumption, and C-stationarity under the SDCMPCC-Robinson constraint qualification. These results provide a theoretical foundation for the use of augmented Lagrangian-type methods in this setting, even in the absence of classical constraint qualifications.

Future research directions include the development of numerical implementations and the extension of the proposed framework to more general symmetric cone complementarity problems. Another promising direction is to explore the ideas introduced in~\cite{guo2022}, where an algorithm closely related to ours is analyzed in the MPCC setting, along with an effective strategy for solving the subproblems arising in the augmented Lagrangian framework.

\section*{Declarations}

This work was funded by FAPESP under contracts no.~2023/01655-2 and 2023/08706-1 and by CNPq under contract no.~407147/2023-3, 403197/2025-2, and 409837/2025-3.

%%===================================================%%
%% For presentation purpose, we have included        %%
%% \bigskip command. Please ignore this.             %%
%%===================================================%%
%\bigskip
%\begin{flushleft}%
%Editorial Policies for:

%\bigskip\noindent
%Springer journals and proceedings: \url{https://www.springer.com/gp/editorial-policies}

%\bigskip\noindent
%Nature Portfolio journals: \url{https://www.nature.com/nature-research/editorial-policies}

%\bigskip\noindent
%\textit{Scientific Reports}: \url{https://www.nature.com/srep/journal-policies/editorial-policies}

%\bigskip\noindent
%BMC journals: \url{https://www.biomedcentral.com/getpublished/editorial-policies}
%\end{flushleft}

\begin{appendices}
\section{Proof of Guignard's Constraint Qualification for the Set $\mathcal{W}$}
This appendix contains the proof of Theorem~\ref{guinard}, which establishes that \emph{Guignard’s constraint qualification} (GCQ) holds for the feasible set $\mathcal{W}$ defined in Section~6. First,
we recall some notions from variational analysis that will be used in the proof of our main results; see~\cite{RockafellarWets1998} for further details. 
Throughout this appendix, $\mathbb{E}$ denotes a finite-dimensional Euclidean space endowed with the inner product $\langle\cdot,\cdot\rangle$. For a set $C\subset\mathbb{E}$, its polar cone is defined by
$
C^\circ := \{ v\in\mathbb{E} \mid \langle v,c\rangle \le 0 \;\; \forall c\in C \}.
$

\begin{definition}
Let $C\subset\mathbb{E}$ be a closed set and let $\bar z\in C$. 
The tangent cone to $C$ at $\bar z$ is defined as
$
T_C(\bar z) := \left\{ d\in\mathbb{E} \;\middle|\; \exists\, t_k \downarrow 0,\; \exists\, d_k \to d \text{ such that } \bar z + t_k d_k \in C \;\; \forall k \right\}.
$
\end{definition}

The tangent cone represents the collection of directions that are compatible with the geometry of the set $C$ at the point $\bar z$ and can be interpreted as the set of admissible first-order variations. In particular, when $C=\mathbb{S}^m_+$ and $\bar z = M \in \mathbb{S}^m_+$, the tangent cone admits a characterization in terms of the kernel of $M$. More precisely, a symmetric matrix $N \in \mathbb{S}^m$ belongs to $T_{\mathbb{S}^m_+}(M)$ if and only if it preserves nonnegativity along the null space of $M$, that is,
$
T_{\mathbb{S}^m_+}(M) = \left\{ N \in \mathbb{S}^m \;\middle|\; d^\top N d \ge 0 \;\; \text{for all } d \in \ker M \right\}.
$
\begin{definition}
A closed set $C\subset\mathbb{E}$ is said to be \emph{Clarke regular} at $\bar z\in C$ if its limiting and regular normal cones coincide at $\bar z$; see, e.g.,~\cite{RockafellarWets1998} for the explicit definitions of these normal cones.
\end{definition}

Clarke regularity ensures that the geometric structure of the set is sufficiently well behaved so that tangent and normal constructions admit simple characterizations. 
In particular, all closed convex sets and all sets defined by $C^1$ equality or inequality constraints are Clarke regular; see~\cite[Example~6.14 and Proposition~6.13]{RockafellarWets1998}.
A fundamental consequence of Clarke regularity is the validity of the intersection rule for tangent cones.

\begin{proposition}[{\cite[Theorem~6.42]{RockafellarWets1998}}]
Let $C_1,\dots,C_m\subset\mathbb{E}$ be closed sets that are Clarke regular at $\bar z\in\bigcap_{i=1}^m C_i$. Then,
\[
T_{\cap_{i=1}^m C_i}(\bar z) = \bigcap_{i=1}^m T_{C_i}(\bar z).
\]
\end{proposition}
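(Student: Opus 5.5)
The statement is the intersection rule for tangent cones of Clarke regular sets, \cite[Theorem~6.42]{RockafellarWets1998}. I would prove it by passing to polars, using the normal-cone calculus for intersections together with the identity $T_C(\bar z)=\widehat N_C(\bar z)^\circ$. That identity holds whenever $C$ is Clarke regular at $\bar z$, because then $T_C(\bar z)$ is convex and coincides with the regular tangent cone.

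\textbf{Easy inclusion.} Write $C:=\bigcap_{i=1}^m C_i$. Since $C\subset C_i$ for each $i$, the definition of the tangent cone gives $T_C(\bar z)\subset T_{C_i}(\bar z)$ directly. Hence $T_C(\bar z)\subset\bigcap_i T_{C_i}(\bar z)$.

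\textbf{Reverse inclusion.} I would proceed as follows.
\begin{enumerate}
\item Record that for regular closed sets, $T_{C_i}(\bar z)$ is a closed convex cone with polar $N_{C_i}(\bar z)=\widehat N_{C_i}(\bar z)$.
\item Invoke the intersection formula for normal cones: $N_C(\bar z)\subset\sum_i N_{C_i}(\bar z)$.
\item Since regular normals always satisfy $\widehat N_C(\bar z)\supset\sum_i\widehat N_{C_i}(\bar z)$, this yields
\[
N_C(\bar z)=\widehat N_C(\bar z)=\sum_i N_{C_i}(\bar z),
\]
and in particular $C$ is regular at $\bar z$.
\item Take polars. This gives
\[
T_C(\bar z)=\widehat N_C(\bar z)^\circ=\Bigl(\sum_i N_{C_i}(\bar z)\Bigr)^\circ=\bigcap_i N_{C_i}(\bar z)^\circ=\bigcap_i T_{C_i}(\bar z),
\]
where the last equality uses the bipolar theorem for the closed convex cones $T_{C_i}(\bar z)$.
\end{enumerate}

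\textbf{Main obstacle.} The hard step is the normal-cone intersection formula in step 2. It is not a consequence of regularity alone; in Rockafellar--Wets it requires the qualification condition that $v_i\in N_{C_i}(\bar z)$ with $\sum_i v_i=0$ forces $v_i=0$ for all $i$. Without it, the statement can fail: two tangent circles meeting at a point are each regular there, yet the tangent cone of their intersection is $\{0\}$ while the intersection of their tangent cones is a line.

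To prove step 2, I would use a penalty and extremal-principle argument. Take $v\in N_C(\bar z)$ as a limit of regular normals. Consider the minimization of $-\langle v,z_0\rangle+\sum_i\|z_i-z_0\|^2$-type penalties over $z_i\in C_i$. Apply Fermat's rule to obtain $v=\lim\sum_i v_i^k$ with $v_i^k\in \widehat N_{C_i}(z_i^k)$. Use the qualification condition to show that the sequences $\{v_i^k\}$ stay bounded, by normalizing and reaching a contradiction. Finally, pass to limits via the outer semicontinuity of $N_{C_i}$.

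Accordingly, I would state the result with this qualification hypothesis added, as in the cited theorem, and treat the paper's citation as supplying it. When the proposition is applied to the pieces of $\mathcal W$ in Theorem~\ref{guinard}, this condition must be checked separately.
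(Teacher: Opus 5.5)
The paper gives no proof of this proposition beyond the citation. You are right that the cited theorem's hypothesis has been dropped, so the proposition as stated is false. Besides your tangent circles, two externally tangent closed disks (convex, hence regular) give $T_{C_1\cap C_2}(0)=\{0\}$, while $T_{C_1}(0)\cap T_{C_2}(0)$ is a line.

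Proving the version with the qualification ``$v_i\in N_{C_i}(\bar z)$, $\sum_i v_i=0\Rightarrow v_i=0$ for all $i$'' is therefore the right move. Your argument is in substance the standard one. Rockafellar--Wets obtain Theorem~6.42 from their normal-cone chain rule for $\{z: F(z)\in D\}$ with $F(z)=(z,\dots,z)$ and $D=C_1\times\cdots\times C_m$. Its qualification condition is exactly yours, since $\nabla F(\bar z)^*(v_1,\dots,v_m)=\sum_i v_i$.

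Steps 1, 3 and 4 are correct as written. In step 4 you rightly use the regularity of $C$ from step 3 to know that $T_C(\bar z)$ is convex, so that $T_C(\bar z)=\widehat N_C(\bar z)^\circ$.

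The one thing to repair is the sketch of step 2. For $v^k\in\widehat N_C(z^k)$ you only know $\langle v^k,z-z^k\rangle\le o(\|z-z^k\|)$ on $C$. So $-\langle v^k,\cdot\rangle$ need not have a local minimum on $C$ at $z^k$ (e.g.\ $C=\{(x,y): y\ge-|x|^{3/2}\}$, $v=(0,-1)\in\widehat N_C(0)$), and the penalized minimizers need not approach $z^k$. There are two standard fixes:
\begin{itemize}
\item use a smooth $h$ with $-\nabla h(z^k)=v^k$ that attains a strict local minimum on $C$ at $z^k$ (the gradient characterization of regular normals, Rockafellar--Wets Theorem~6.11); or
\item add $\varepsilon\|z_0-z^k\|$ to the penalty and let $\varepsilon\downarrow 0$.
\end{itemize}
After that, Fermat's rule, the normalization argument via outer semicontinuity of the $N_{C_i}$, and the qualification at $\bar z$ work as you describe.

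Your closing caveat is more than a formality. In Case~1 of Theorem~\ref{guinard} the qualification fails at every $(W_G,W_H)\in\mathcal W\setminus\{(0,0)\}$. Indeed, $(W_H,W_G)\in N_{\mathbb S^m_+}(W_G)\times N_{\mathbb S^m_-}(W_H)$ and $-(W_H,W_G)\in N_{C_\varphi}(W_G,W_H)$, and these sum to zero.

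Accordingly, the identity $T_{\mathcal W}=L_{\mathcal W}$ claimed there fails whenever a biactive index is present. Take $m=2$ and the point $(\operatorname{diag}(1,0),0)\in\mathcal W$. The direction $(\operatorname{diag}(0,1),\operatorname{diag}(0,-1))$ lies in $L_{\mathcal W}$. However, $\operatorname{diag}(1,0)+t\Delta_G\succ 0$ for $\Delta_G$ near $\operatorname{diag}(0,1)$ and small $t>0$. Complementarity then forces the second component to vanish, so the direction is not tangent.

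At that point $L_{\mathcal W}$ is still the convex hull of $T_{\mathcal W}$, so Guignard's condition itself may survive, but not through this proposition.
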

We next recall a regularity property for sets defined by smooth inequality constraints, which will be crucial in the analysis of complementarity-type conditions.

\begin{lemma}[{\cite[Proposition~6.5 and Theorem~6.31]{RockafellarWets1998}}]\label{clarke}
Let $\varphi:\mathbb{E}\to\mathbb{R}$ be continuously differentiable and define
\(
C := \{ z\in\mathbb{E} \mid \varphi(z) \ge 0 \}.
\)
If $\nabla \varphi(\bar z) \neq 0$, then the set $C$ is Clarke regular at $\bar z$, and its tangent cone is given by
\[
T_C(\bar z)=
\begin{cases}
\mathbb{E}, & \text{if } \varphi(\bar z)>0,\\[2mm]
\{ d\in\mathbb{E} \mid \langle \nabla \varphi(\bar z), d \rangle \ge 0 \},
& \text{if } \varphi(\bar z)=0.
\end{cases}
\]
\end{lemma}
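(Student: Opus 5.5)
We verify the two cases of the formula for $T_C(\bar z)$ directly from the definition of the tangent cone, and then obtain Clarke regularity by computing both normal cones explicitly.

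\emph{The case $\varphi(\bar z)>0$.} By continuity of $\varphi$ there is a neighborhood $B$ of $\bar z$ with $B\subset C$, so $\bar z$ is an interior point of $C$. Hence $T_C(\bar z)=\mathbb{E}$. Moreover, the regular normal cone at $\bar z$ and at every point of $B$ is $\{0\}$, so the limiting normal cone at $\bar z$ is also $\{0\}$. The two cones coincide, and Clarke regularity holds trivially.

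\emph{The case $\varphi(\bar z)=0$: the tangent cone.} Write $g:=\nabla\varphi(\bar z)\neq 0$. We prove two inclusions.
\begin{itemize}
\item[(i)] Let $d\in T_C(\bar z)$, with $\bar z+t_kd_k\in C$, $t_k\downarrow 0$, $d_k\to d$. A first-order Taylor expansion gives $0\le \varphi(\bar z+t_kd_k)=t_k\langle g,d_k\rangle+o(t_k)$. Dividing by $t_k$ and letting $k\to\infty$ yields $\langle g,d\rangle\ge 0$.
\item[(ii)] Conversely, if $\langle g,d\rangle>0$, the same expansion shows $\varphi(\bar z+td)>0$ for all small $t>0$, so $d\in T_C(\bar z)$. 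The tangent cone is closed, and since $g\neq0$ the closed half-space $\{d \mid \langle g,d\rangle\ge0\}$ is the closure of the open half-space $\{d \mid \langle g,d\rangle>0\}$. Therefore the whole closed half-space lies in $T_C(\bar z)$.
\end{itemize}

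\emph{The case $\varphi(\bar z)=0$: regularity.} The regular normal cone is the polar of the tangent cone, so it equals $\{-\lambda g \mid \lambda\ge 0\}$. The limiting normal cone consists of limits of regular normals $v_k$ taken at points $z_k\in C$ with $z_k\to\bar z$. By continuity of $\nabla\varphi$, we have $\nabla\varphi(z_k)\neq0$ for $k$ large, so the computation above applies at each $z_k$. Thus $v_k=0$ if $\varphi(z_k)>0$, and $v_k=-\lambda_k\nabla\varphi(z_k)$ with $\lambda_k\ge0$ if $\varphi(z_k)=0$. If $v_k\to v$, then $\lambda_k$ stays bounded, because $\|\nabla\varphi(z_k)\|$ is bounded away from $0$. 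Passing to a subsequence, $\lambda_k\to\lambda\ge0$, and hence $v=-\lambda g$. The limiting cone is therefore contained in the regular cone. The reverse inclusion always holds, so the two cones coincide and $C$ is Clarke regular at $\bar z$.

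\emph{Main obstacle.} The only delicate step is this last limiting-normal argument. It needs the gradient to be nonvanishing uniformly near $\bar z$, so that the coefficients $\lambda_k$ remain bounded. This is exactly what continuity of $\nabla\varphi$ together with $\nabla\varphi(\bar z)\neq0$ provides. Alternatively, one can invoke \cite[Theorem~6.31]{RockafellarWets1998} directly, since $\nabla\varphi(\bar z)\neq0$ is its constraint qualification.
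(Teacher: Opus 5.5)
Your proof is correct, but it follows a more elementary route than the paper. The paper gives no proof of its own; it specializes the constraint-system theory of \cite{RockafellarWets1998}. There one takes $X=\mathbb{E}$, $D=\mathbb{R}_+$ and $F=\varphi$. The constraint qualification of \cite[Theorem~6.31]{RockafellarWets1998} then reads: $y\in N_{\mathbb{R}_+}(\varphi(\bar z))$ and $y\,\nabla\varphi(\bar z)=0$ imply $y=0$. This holds automatically when $\varphi(\bar z)>0$, and is exactly $\nabla\varphi(\bar z)\neq0$ when $\varphi(\bar z)=0$. Since $\mathbb{R}_+$ is convex, hence Clarke regular, that theorem gives in one stroke the regularity of $C$ and the formula
\[
T_C(\bar z)=\{d\mid \langle\nabla\varphi(\bar z),d\rangle\in T_{\mathbb{R}_+}(\varphi(\bar z))\},
\]
which splits into your two cases. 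Proposition~6.5 supplies the relation $\widehat N_C=T_C^\circ$.

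You instead verify everything by hand. You get the tangent cone from a first-order expansion plus the fact that the closed half-space is the closure of the open one. You get regularity by computing regular normals at nearby points and bounding the coefficients $\lambda_k$ via $\|\nabla\varphi(z_k)\|\ge c>0$. That bound is precisely how the constraint qualification acts in the general theorem: it keeps the multipliers bounded when passing to limiting normals. So your argument is in effect the scalar case of the Rockafellar--Wets proof, made self-contained. It shows exactly where $\nabla\varphi(\bar z)\neq0$ is used, and that it is not needed at all when $\varphi(\bar z)>0$. The citation, by contrast, covers general systems $F(z)\in D$ with regular $D$ and abstract set constraints at no extra cost.

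One line worth adding: $C$ is closed because $\varphi$ is continuous, and the definition of Clarke regularity presupposes this.
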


Finally, we recall a basic property of normal cones to Cartesian product sets, which will be used when dealing with matrix variables and product spaces.

\begin{lemma}
Let $C_1,C_2\subset\mathbb{E}$ be closed convex sets. Then, for any $(x_1,x_2)\in C_1\times C_2$, it holds that
$
N_{C_1\times C_2}(x_1,x_2)
=
N_{C_1}(x_1)\times N_{C_2}(x_2).
$
\end{lemma}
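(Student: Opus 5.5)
The statement is the product rule for normal cones of closed convex sets. I would derive it directly from the convex characterization of the normal cone. For a closed convex $C\subset\mathbb E$ and $x\in C$, the regular and limiting normal cones coincide with
\[
N_C(x)=\{v\in\mathbb E \mid \langle v,c-x\rangle\le 0\ \ \forall c\in C\}
\]
(see \cite{RockafellarWets1998}). The product $C_1\times C_2$ is again closed and convex in the product space. That space carries the inner product $\langle (v_1,v_2),(c_1,c_2)\rangle=\langle v_1,c_1\rangle+\langle v_2,c_2\rangle$, so the same formula applies to it. The proof is then two set inclusions.

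\emph{Inclusion $\supseteq$.} Take $v_1\in N_{C_1}(x_1)$ and $v_2\in N_{C_2}(x_2)$. For any $(c_1,c_2)\in C_1\times C_2$ we have
\[
\langle (v_1,v_2),(c_1-x_1,c_2-x_2)\rangle=\langle v_1,c_1-x_1\rangle+\langle v_2,c_2-x_2\rangle\le 0 .
\]
Hence $(v_1,v_2)\in N_{C_1\times C_2}(x_1,x_2)$.

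\emph{Inclusion $\subseteq$.} Let $(v_1,v_2)\in N_{C_1\times C_2}(x_1,x_2)$. Fix an arbitrary $c_1\in C_1$ and test the defining inequality with the admissible point $(c_1,x_2)$. The second summand vanishes, leaving $\langle v_1,c_1-x_1\rangle\le0$, so $v_1\in N_{C_1}(x_1)$. Testing instead with $(x_1,c_2)$ for arbitrary $c_2\in C_2$ gives $v_2\in N_{C_2}(x_2)$.

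There is no real obstacle here. The only point to be explicit about is that we use the convex-analysis normal cone, which coincides with the variational-analysis normal cones for convex sets. Freezing one coordinate at $x_i$ is legitimate because $x_i\in C_i$. The same argument extends by induction to finitely many factors, which is how it would be applied to $\mathbb S^m_+\times\mathbb S^m_-$.
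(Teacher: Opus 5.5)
Your proof is correct and complete. The paper gives no proof of this lemma; it only recalls it as a standard fact, with \cite{RockafellarWets1998} as the implicit source. Your two-inclusion argument, based on the description $N_C(x)=\{v \mid \langle v,c-x\rangle\le 0\ \forall c\in C\}$, therefore supplies the verification the paper omits.

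For comparison, the general product rule in \cite{RockafellarWets1998} holds for arbitrary closed sets, for both regular and limiting normal cones. There it is phrased through a local inequality and a limiting procedure. The convexity hypothesis is exactly what lets you use the global inequality and freeze one coordinate at $x_i\in C_i$.

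Your argument has one further advantage. It never uses that both factors lie in the same space $\mathbb{E}$, and it works directly for any finite number of factors. That is the form the appendix actually needs when it writes $\mathcal N_{\mathcal K}=\mathcal N_{\mathbb S^m_+}\times\mathcal N_{\mathbb S^m_-}\times\mathcal N_{\mathbb R_+}$ for $\mathcal K=\mathbb S^m_+\times\mathbb S^m_-\times\mathbb R_+$. There the factors live in $\mathbb S^m$, $\mathbb S^m$ and $\mathbb R$, a case the lemma as literally stated does not cover.
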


This decomposition property allows us to treat normal cones of product sets componentwise and will be repeatedly used in the sequel. To introduce Guignard's constraint qualification in the context of the problem~\eqref{NLSDP}, we first recall some basic constructions from cone geometry.
We denote the feasible set of~\eqref{NLSDP} by
$$
\mathcal{F} := \{ x \in \mathbb{R}^n \mid H(x) \preceq 0,\ P(x) = 0 \},
$$
and consider a feasible point $\bar{x} \in \mathcal{F}$. The (Bouligand) tangent cone to $\mathcal{F}$ at $\bar x$ is denoted by $T_{\mathcal{F}}(\bar x)$. 
Moreover, the linearized tangent cone to $\mathcal{F}$ at $\bar x$ is given by
\[
L_{\mathcal{F}}(\bar x)
:=
\Big\{ d \in \mathbb{R}^n \ \Big| \ DH(\bar x)d \in T_{\mathbb{S}^m_-}(H(\bar x)), \; DP(\bar x)d = 0 \Big\},
\]
where $T_{\mathbb{S}^m_-}(H(\bar x))$ denotes the tangent cone to the closed convex cone $\mathbb{S}^m_-$ at $H(\bar x)$.
 The relevance of these cones stems from the fact that, if $\bar{x}$ is a local minimizer of~\eqref{NLSDP}, the following \emph{first-order geometric necessary condition} holds:
\begin{equation}\label{eq:FO_geom}
-\nabla f(\bar{x}) \in T_{\mathcal{F}}(\bar{x})^\circ.
\end{equation}  
Although condition~\eqref{eq:FO_geom} is conceptually simple, it is rarely tractable in practice, since the tangent cone $T_{\mathcal{F}}(\bar{x})$ may not admit a tractable explicit characterization.  
In contrast, the polar of the linearized tangent cone can be expressed explicitly, as shown in the following adaptation of a result by Guignard~\cite{andreani2023first}.
\begin{lemma}\label{lem:guignard}
Let $\bar{x} \in \mathcal{F}$. Then,
\[
L_{\mathcal{F}}(\bar{x})^\circ = \operatorname{cl}\big( \mathcal{H}(\bar{x}) \big),
\]
where
$
\mathcal{H}(\bar{x}) := DH(\bar{x})^* \mathcal{N}_{\mathbb{S}^m_-}(H(\bar{x})) 
\ + \ DP(\bar{x})^* \mathbb{R}^{p\times q},
$
and $\mathcal{N}_{\mathbb{S}^m_-}(H(\bar{x})) := T_{\mathbb{S}^m_-}(H(\bar{x}))^\circ$ denotes the normal cone to $\mathbb{S}^m_-$ at $H(\bar{x})$.
\end{lemma}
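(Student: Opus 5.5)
The plan is to recognize $L_{\mathcal{F}}(\bar x)$ as the preimage of a closed convex cone under a linear map, and then apply the standard polar/bipolar duality for such preimages, which is a conic Farkas lemma. Define the linear operator $\mathcal{A}_{\bar x}\colon\mathbb{R}^n\to\mathbb{S}^m\times\mathbb{R}^{p\times q}$ by $\mathcal{A}_{\bar x}d:=(DH(\bar x)d,\,DP(\bar x)d)$. Set $K:=T_{\mathbb{S}^m_-}(H(\bar x))\times\{0\}$. Then $L_{\mathcal{F}}(\bar x)=\mathcal{A}_{\bar x}^{-1}(K)$. Since $\mathbb{S}^m_-$ is a closed convex cone, $T_{\mathbb{S}^m_-}(H(\bar x))$ is a closed convex cone, and so is $K$. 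Its polar is $K^\circ=\mathcal{N}_{\mathbb{S}^m_-}(H(\bar x))\times\mathbb{R}^{p\times q}$. The adjoint is $\mathcal{A}_{\bar x}^*(\Omega,\Lambda)=DH(\bar x)^*\Omega+DP(\bar x)^*\Lambda$, so $\mathcal{H}(\bar x)=\mathcal{A}_{\bar x}^*K^\circ$. In particular, $\mathcal{H}(\bar x)$ is a convex cone, being the linear image of one.

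\emph{Inclusion $\operatorname{cl}(\mathcal{H}(\bar x))\subset L_{\mathcal{F}}(\bar x)^\circ$.} Let $w=DH(\bar x)^*\Omega+DP(\bar x)^*\Lambda$ with $\Omega\in\mathcal{N}_{\mathbb{S}^m_-}(H(\bar x))$, and let $d\in L_{\mathcal{F}}(\bar x)$. Then
\[
\langle w,d\rangle=\langle\Omega,DH(\bar x)d\rangle+\langle\Lambda,DP(\bar x)d\rangle .
\]
The first term is $\le 0$ because $DH(\bar x)d\in T_{\mathbb{S}^m_-}(H(\bar x))$ and $\Omega$ lies in the polar of this cone. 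The second term vanishes because $DP(\bar x)d=0$. Hence $\mathcal{H}(\bar x)\subset L_{\mathcal{F}}(\bar x)^\circ$. A polar cone is always closed, so passing to the closure gives the inclusion.

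\emph{Reverse inclusion.} Suppose $v\in L_{\mathcal{F}}(\bar x)^\circ$ but $v\notin\operatorname{cl}(\mathcal{H}(\bar x))$. Since $\operatorname{cl}(\mathcal{H}(\bar x))$ is a closed convex cone, strict separation gives $d\in\mathbb{R}^n$ with $\langle v,d\rangle>0$ and $\langle w,d\rangle\le0$ for every $w\in\mathcal{H}(\bar x)$. First take $\Omega=0$ and let $\Lambda$ range over $\mathbb{R}^{p\times q}$. This yields $\langle\Lambda,DP(\bar x)d\rangle\le0$ for all $\Lambda$, hence $DP(\bar x)d=0$. Next take $\Lambda=0$. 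This yields $\langle\Omega,DH(\bar x)d\rangle\le0$ for all $\Omega\in\mathcal{N}_{\mathbb{S}^m_-}(H(\bar x))=T_{\mathbb{S}^m_-}(H(\bar x))^\circ$. By the bipolar theorem, applicable because the tangent cone is closed and convex, $DH(\bar x)d\in T_{\mathbb{S}^m_-}(H(\bar x))$. Therefore $d\in L_{\mathcal{F}}(\bar x)$, and so $\langle v,d\rangle\le0$, a contradiction.

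\emph{Main obstacle.} The only delicate point is that $\mathcal{H}(\bar x)$ itself need not be closed: the linear image of the nonpolyhedral cone $\mathcal{N}_{\mathbb{S}^m_-}(H(\bar x))$ can fail to be closed. This is why the statement contains the closure and why the separation must be carried out against $\operatorname{cl}(\mathcal{H}(\bar x))$. Beyond that, the argument only requires checking that $K$ is a closed convex cone, so that the bipolar identity $T=(T^\circ)^\circ$ is available, and that the $\Lambda$-component of $K^\circ$ is the whole space $\mathbb{R}^{p\times q}$.
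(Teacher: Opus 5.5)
Your proof is correct. The paper gives no proof of this lemma; it only attributes the result to Guignard via \cite{andreani2023first}. Your argument is the standard conic Farkas argument behind that citation: you write $L_{\mathcal F}(\bar x)=\mathcal A_{\bar x}^{-1}(K)$ with $K=T_{\mathbb S^m_-}(H(\bar x))\times\{0\}$, and then prove $(\mathcal A_{\bar x}^{-1}K)^\circ=\operatorname{cl}(\mathcal A_{\bar x}^{*}K^\circ)$ by separating from the closed convex cone $\operatorname{cl}(\mathcal H(\bar x))$ and applying the bipolar theorem to the closed convex tangent cone. It therefore supplies the details the paper leaves to the reference. It also makes explicit that no constraint qualification is used, and that the closure is needed only because the linear image $\mathcal A_{\bar x}^{*}K^\circ$ may fail to be closed.
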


Here, the term $DP(\bar{x})^* \mathbb{R}^{p\times q}$ corresponds to the contribution of the matrix equality constraint $P(x)=0$ to the polar cone.  
Combining the geometric condition~\eqref{eq:FO_geom} with Lemma~\ref{lem:guignard} yields the following theorem.

\begin{theorem}\label{thm:guignard}
Let $\bar{x} \in \mathcal{F}$ be a local minimizer of~\eqref{NLSDP}.  
If
\[
T_{\mathcal{F}}(\bar{x})^\circ = L_{\mathcal{F}}(\bar{x})^\circ
\quad \text{and} \quad \mathcal{H}(\bar{x}) \ \text{is closed},
\]
then there exist $(\bar{\Omega}, \bar{\Lambda}) \in \mathbb{S}^m_+ \times \mathbb{R}^{p\times q}$ such that
\[
\nabla f(\bar{x}) + DH(\bar{x})^* \bar{\Omega} + DP(\bar{x})^* \bar{\Lambda} = 0,
\quad \langle H(\bar{x}), \bar{\Omega} \rangle = 0.
\]
\end{theorem}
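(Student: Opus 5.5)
The plan is to chain the geometric first-order condition~\eqref{eq:FO_geom} with Guignard's hypothesis and Lemma~\ref{lem:guignard}, and then read off the multipliers from the structure of the normal cone to $\mathbb{S}^m_-$.

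\textbf{Step 1: the geometric condition.} Although~\eqref{eq:FO_geom} is stated in the text, for completeness I will recall its short justification. Take $d\in T_{\mathcal F}(\bar x)$, with $t_k\downarrow 0$, $d_k\to d$ and $\bar x+t_kd_k\in\mathcal F$. Local optimality gives $f(\bar x+t_kd_k)\ge f(\bar x)$ for $k$ large. A first-order Taylor expansion of the $C^1$ function $f$, divided by $t_k$, then yields $\langle\nabla f(\bar x),d\rangle\ge 0$. Hence $-\nabla f(\bar x)\in T_{\mathcal F}(\bar x)^\circ$.

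\textbf{Step 2: pass to $\mathcal H(\bar x)$.} By the hypothesis $T_{\mathcal F}(\bar x)^\circ=L_{\mathcal F}(\bar x)^\circ$ and Lemma~\ref{lem:guignard}, we get $-\nabla f(\bar x)\in\operatorname{cl}(\mathcal H(\bar x))$. Since $\mathcal H(\bar x)$ is assumed closed, this means $-\nabla f(\bar x)\in\mathcal H(\bar x)$. So there exist $\bar\Omega\in\mathcal N_{\mathbb S^m_-}(H(\bar x))$ and $\bar\Lambda\in\mathbb R^{p\times q}$ with $\nabla f(\bar x)+DH(\bar x)^*\bar\Omega+DP(\bar x)^*\bar\Lambda=0$.

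\textbf{Step 3: identify the normal cone.} It remains to show that $\mathcal N_{\mathbb S^m_-}(H(\bar x))=\{\Omega\in\mathbb S^m_+ : \langle H(\bar x),\Omega\rangle=0\}$. For a closed convex cone $K$ and $Z\in K$, the normal cone is $N_K(Z)=\{V:\langle V,Y-Z\rangle\le 0\ \forall Y\in K\}$.
\begin{itemize}
\item Taking $Y=0$ and $Y=2Z$ gives $\langle V,Z\rangle=0$.
\item Then $\langle V,Y\rangle\le 0$ for all $Y\in K$, i.e.\ $V\in K^\circ$.
\item The polar of $\mathbb S^m_-$ is $\mathbb S^m_+$: test against $Y=-vv^\top$ for one inclusion, and use the self-duality of $\mathbb S^m_+$ for the other.
\end{itemize}
The converse inclusion is immediate. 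Since $\mathbb S^m_-$ is convex, this normal cone coincides with the polar of the tangent cone used in the definition. Therefore $\bar\Omega\succeq0$ and $\langle H(\bar x),\bar\Omega\rangle=0$, which completes the proof.

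\textbf{Main obstacle.} The argument is essentially bookkeeping. The only point needing care is the normal cone identification in Step 3, in particular that $T^\circ_{\mathbb S^m_-}(H(\bar x))$ equals the convex-analysis normal cone. The closedness hypothesis is precisely what removes the closure in Lemma~\ref{lem:guignard}, so no limiting argument on multipliers is needed.
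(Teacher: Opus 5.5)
Your proof is correct and takes essentially the same route as the paper. The paper gives no separate proof: it obtains the theorem by combining $-\nabla f(\bar x)\in T_{\mathcal F}(\bar x)^\circ$ with the Guignard hypothesis, Lemma~\ref{lem:guignard}, and the closedness of $\mathcal H(\bar x)$. Your Steps~1--3 supply exactly the details it leaves implicit, including the identification $\mathcal N_{\mathbb S^m_-}(H(\bar x))=\{\Omega\in\mathbb S^m_+ : \langle H(\bar x),\Omega\rangle=0\}$.
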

Condition
$T_{\mathcal{F}}(\bar{x})^\circ = L_{\mathcal{F}}(\bar{x})^\circ
\quad \text{and} \quad \mathcal{H}(\bar{x}) \ \text{closed}
$
is known as \emph{Guignard's constraint qualification} (GCQ).  
It is the weakest condition ensuring that the KKT conditions are necessary for local optimality: if the KKT system holds at $\bar{x}$ for every $C^1$ objective $f$ having $\bar{x}$ as a local minimizer over $\mathcal{F}$, then GCQ must also hold at $\bar{x}$, see~\cite{andreani2023first}.
 
 We now show that Guignard's constraint qualification holds for the complementarity set
$$
\mathcal{W} = \left\{ (W_G, W_H) \in \mathbb{S}^m_{+} \times \mathbb{S}^m_{-} \;\middle|\;
 \langle W_G, W_H \rangle \geq 0 \right\}.
$$
\begin{theorem}\label{guinard}
Let $(W_G,W_H)\in \mathcal{W}$. Then Guignard's constraint qualification holds at $(W_G,W_H)$ $$(T_{\mathcal{W} }(W_G,W_H)^\circ = L_{\mathcal{W} }(W_G,W_H)^\circ)$$
and the set $\mathcal H(W_G,W_H)$ is closed.
\end{theorem}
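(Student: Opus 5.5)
The plan is to compute both cones explicitly at a point $\bar W=(\bar W_G,\bar W_H)\in\mathcal W$ and compare their polars. First, since $W_G\succeq0$, $W_H\preceq0$ forces $\langle W_G,W_H\rangle\le0$, the set can be rewritten as $\mathcal W=\{(W_G,W_H)\in\mathbb S^m_+\times\mathbb S^m_-: W_GW_H=0\}$. Hence $\bar W_G$ and $\bar W_H$ commute and admit a common diagonalization $\bar W_G=U\operatorname{diag}(\Lambda_{\alpha\alpha},0,0)U^\top$, $\bar W_H=U\operatorname{diag}(0,0,\Lambda_{\gamma\gamma})U^\top$ with index sets $(\alpha,\beta,\gamma)$ as in Section~\ref{sessao4}. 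Everything will be computed in these coordinates. The linearized cone, with the constraint functions $W_G\succeq0$, $W_H\preceq0$, $\varphi(W):=\langle W_G,W_H\rangle\ge0$, is
\[
L_{\mathcal W}(\bar W)=\{(D_G,D_H): D_G\in T_{\mathbb S^m_+}(\bar W_G),\ D_H\in T_{\mathbb S^m_-}(\bar W_H),\ \langle D_G,\bar W_H\rangle+\langle \bar W_G,D_H\rangle\ge0\},
\]
where the last inequality is dropped if $\nabla\varphi(\bar W)=(\bar W_H,\bar W_G)=0$. Note that $\langle D_G,\bar W_H\rangle\le0$ on the tangent cone, because $\bar W_H$ lives on the $\gamma$-block where $(\widetilde D_G)_{\gamma\gamma}\succeq0$. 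Similarly $\langle\bar W_G,D_H\rangle\le0$. So the linear inequality forces $(\widetilde D_G)_{\gamma\gamma}=0$ and $(\widetilde D_H)_{\alpha\alpha}=0$. The result is an explicit description: $(\widetilde D_G)_{(\beta\cup\gamma)(\beta\cup\gamma)}\succeq0$ with zero $\gamma\gamma$ block, hence also zero $\beta\gamma$ block by PSD-ness, and symmetrically for $D_H$.

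Second, I would compute $T_{\mathcal W}(\bar W)$. The inclusion $T\subseteq L$ is standard. It is not equal: $L$ is convex while $T$ contains the nonconvex biactive condition $(\widetilde D_G)_{\beta\beta}\succeq0$, $(\widetilde D_H)_{\beta\beta}\preceq0$, $(\widetilde D_G)_{\beta\beta}(\widetilde D_H)_{\beta\beta}=0$. Since GCQ only needs equality of polars, I would avoid a full characterization of $T$. Instead, I plan to exhibit a union of convex subcones $T_J\subseteq T_{\mathcal W}(\bar W)$ and show that their convex hull is dense in $L$, i.e.\ $\sum_J T_J\supseteq L$. Then $T^\circ\subseteq(\sum_J T_J)^\circ\subseteq L^\circ\subseteq T^\circ$.

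Concretely, for each orthogonal splitting of $\mathbb R^{|\beta|}$ into $\operatorname{span}$ of $Q_1$ and its complement $Q_2$, take directions where $D_G$ has $\beta\beta$ block $Q_1AQ_1^\top$ with $A\succeq0$ and $D_H$ has $\beta\beta$ block $Q_2BQ_2^\top$ with $B\preceq0$. The off-diagonal blocks involving $\alpha$, and $\alpha\alpha$/$\gamma\gamma$ blocks, are left free as permitted in $L$. To verify that such directions are tangent, I will construct explicit curves $\bar W+tD+o(t)$ in $\mathcal W$. The construction rotates the eigenbasis by $U\exp(tK)$, with $K$ skew-symmetric absorbing the $\alpha\beta$, $\alpha\gamma$, $\beta\gamma$ components via the Sylvester-type relations $K_{\alpha\beta}\Lambda$ etc. It then takes block-diagonal matrices $\operatorname{diag}(\Lambda_{\alpha\alpha}+tE,\,tQ_1AQ_1^\top,0)\succeq0$ and $\operatorname{diag}(0,\,tQ_2BQ_2^\top,\Lambda_{\gamma\gamma}+tF)\preceq0$, whose product is zero. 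Every $\beta\beta$ pair in $L$, an arbitrary $X\succeq0$ and $Y\preceq0$, is the sum of $(X,0)$ and $(0,Y)$, each lying in some $T_J$ (take $Q_1=I$ or $Q_2=I$). Hence the sum covers $L$.

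Finally, closedness of $\mathcal H(\bar W)=\mathcal N_{\mathbb S^m_+\times\mathbb S^m_-}(\bar W)+\mathbb R_+\nabla\varphi(\bar W)$ follows because it is the sum of a closed convex cone and a ray. Such a sum is closed provided there is no opposite direction, i.e.\ $-\nabla\varphi(\bar W)=(-\bar W_H,-\bar W_G)$ is not in the normal cone unless it is zero. Alternatively, I can check directly that $\mathcal H$ equals the polar of the explicitly computed $L$ (both polyhedral-like block descriptions), which is automatically closed.

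The main obstacle I expect is the tangency construction in the second step. One must verify that the rotation $U\exp(tK)$ simultaneously produces the required off-diagonal first-order terms for both $D_G$ and $D_H$ from a single $K$. The off-diagonal constraints in $L$ (e.g.\ that $(\widetilde D_G)_{\beta\gamma}=0$ and compatibility of $(\widetilde D_G)_{\alpha\gamma}$ with $(\widetilde D_H)_{\alpha\gamma}$ through $\Lambda_{\alpha\alpha}$, $\Lambda_{\gamma\gamma}$) have to match exactly. If that matching fails, I would instead decompose $L$ further into more summands, each realizable by a simpler curve.
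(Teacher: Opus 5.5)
\textbf{There is a genuine gap, and it cannot be repaired.} The statement itself fails at most points of $\mathcal W$, and both of your key steps break exactly there.

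\textbf{The tangency step.} You assume that $L_{\mathcal W}(\bar W)$ ties $(\widetilde D_G)_{\alpha\gamma}$ to $(\widetilde D_H)_{\alpha\gamma}$; it does not. Your own computation shows that $\langle D_G,\bar W_H\rangle+\langle\bar W_G,D_H\rangle\ge0$ forces to zero only $(\widetilde D_G)_{\gamma\gamma}$, $(\widetilde D_G)_{\beta\gamma}$, $(\widetilde D_H)_{\alpha\alpha}$, $(\widetilde D_H)_{\alpha\beta}$, besides the sign conditions on the $\beta\beta$ blocks. So both $\alpha\gamma$ blocks are free in $L$.

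Every tangent direction, however, satisfies the linearization of $W_GW_H=0$: expand $(\bar W_G+t_kD_G^k)(\bar W_H+t_kD_H^k)=0$ and divide by $t_k$. Its $(\alpha,\gamma)$ block reads
\[
(\widetilde D_G)_{\alpha\gamma}\Lambda_{\gamma\gamma}+\Lambda_{\alpha\alpha}(\widetilde D_H)_{\alpha\gamma}=0 .
\]
This is exactly the coupling your single rotation $K$ produces. Since it is linear, it survives sums, convex hulls and closures of subcones of $T_{\mathcal W}(\bar W)$. Hence $\sum_J T_J\supseteq L_{\mathcal W}(\bar W)$ is impossible whenever $\alpha\neq\emptyset\neq\gamma$, and splitting $L$ into more summands cannot help.

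Concretely, take $m=2$, $\bar W_G=\operatorname{diag}(1,0)$, $\bar W_H=\operatorname{diag}(0,-1)$. Near $\bar W$ the set $\mathcal W$ is the three-dimensional manifold $\{(a\,uu^\top,-b\,vv^\top): a,b>0,\ u\perp v\ \text{unit}\}$. Therefore $T_{\mathcal W}(\bar W)=\{(D_G,D_H): (D_G)_{22}=0,\ (D_H)_{11}=0,\ (D_G)_{12}=(D_H)_{12}\}$, while $L_{\mathcal W}(\bar W)=\{(D_G,D_H): (D_G)_{22}=0,\ (D_H)_{11}=0\}$ is four-dimensional. With $Z:=\begin{pmatrix}0&1\\1&0\end{pmatrix}$, the pair $(Z,-Z)$ lies in $T_{\mathcal W}(\bar W)^\circ$ but not in $L_{\mathcal W}(\bar W)^\circ$.

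Your observation that $T\neq L$ because of the biactive block is correct. It already refutes Case~1 of the paper's proof, which uses the tangent-cone intersection rule without its transversality hypothesis. That hypothesis fails precisely because $\nabla\varphi(\bar W)=(\bar W_H,\bar W_G)$ belongs to $\mathcal N_{\mathbb S^m_+}(\bar W_G)\times\mathcal N_{\mathbb S^m_-}(\bar W_H)$. Your convex-hull argument does give the polar equality, but only when $\alpha=\emptyset$ or $\gamma=\emptyset$.

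\textbf{The closedness step.} It contains a sign error that hides a second failure. Because the constraint is $\varphi\ge0$, one has $\mathcal N_{\mathbb R_+}(0)=\mathbb R_-$, so $\mathcal H(\bar W)=\mathcal N_{\mathbb S^m_+}(\bar W_G)\times\mathcal N_{\mathbb S^m_-}(\bar W_H)+\mathbb R_-\nabla\varphi(\bar W)$, as in the paper, not $+\mathbb R_+\nabla\varphi(\bar W)$. The relevant opposite direction is then $\nabla\varphi(\bar W)$ itself, which lies in the normal cone. So your recession test fails for every $\bar W\neq0$, and $\mathcal H(\bar W)$ is a closed cone plus a line, which need not be closed.

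Take $\bar W_G=0$, $\bar W_H=\operatorname{diag}(0,-1)$. The elements of $\mathcal H(\bar W)$ are the pairs $(A+\operatorname{diag}(0,s),B)$ with $A\preceq0$, $s\ge0$, and $B\in\mathcal N_{\mathbb S^2_-}(\bar W_H)$. Choosing $A_k=\begin{pmatrix}-1/k&1\\1&-k\end{pmatrix}\preceq0$, $s=k$, and $B=0$ gives $\bigl(\begin{pmatrix}-1/k&1\\1&0\end{pmatrix},0\bigr)\in\mathcal H(\bar W)$, which converges to $(Z,0)$. But $(Z,0)\notin\mathcal H(\bar W)$, since that would require $\begin{pmatrix}0&1\\1&-s\end{pmatrix}\preceq0$ for some $s\ge0$. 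Yet $(Z,0)\in L_{\mathcal W}(\bar W)^\circ=\{(A,B): A_{11}\le0,\ B=\operatorname{diag}(b,0),\ b\ge0\}$, so your fallback identity $\mathcal H=L^\circ$ fails too.

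The paper's part (iii) misses this. It treats $\mathcal Q(\mathcal H)$ as a product of two-dimensional sets indexed by eigenvalues, discarding the off-diagonal blocks of the normal cones and the fact that one scalar $\tau$ is shared by all indices.

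In short, the theorem as stated holds only at very special points of $\mathcal W$ (for instance $\bar W=0$). No argument along your lines, or the paper's, can establish it in general.
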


\begin{proof}
We work in the Euclidean space $\mathbb E := \mathbb S^m \times \mathbb S^m$ endowed with the inner product
\[
\langle (A,B),(X,Y)\rangle := \langle A,X\rangle + \langle B,Y\rangle .
\]

\noindent (i) Linearized cone. 

Define  $\Phi:\mathbb E \to \mathbb S^m \times \mathbb S^m \times \mathbb R$ by
\(
\Phi(W_G,W_H) := (W_G,W_H,\langle W_G,W_H\rangle),
\)
and set
\(
\mathcal{K} := \mathbb S^m_+ \times \mathbb S^m_- \times \mathbb R_+ .
\)
Then
\(
\mathcal{W} = \{(W_G,W_H)\in \mathbb E \mid \Phi(W_G,W_H)\in \mathcal{K}\}.
\)
The derivative of $\Phi$ at $(W_G,W_H)$ in the direction $(\Delta_G,\Delta_H)$ is given by
$
D\Phi(W_G,W_H)(\Delta_G,\Delta_H)
= (\Delta_G,\Delta_H,\langle \Delta_G,W_H\rangle + \langle W_G,\Delta_H\rangle).
$
Hence,
\begin{equation*}
\begin{aligned}
    L_{\mathcal W}(W_G,W_H)
=
\{&(\Delta_G,\Delta_H)\in\mathbb E :
\Delta_G \in T_{\mathbb S^m_+}(W_G),\\
&\Delta_H \in T_{\mathbb S^m_-}(W_H),\ 
\langle \Delta_G,W_H\rangle + \langle W_G,\Delta_H\rangle \ge 0\}.
\end{aligned}
\end{equation*}

\noindent (ii)  Guignard's constraint qualification.  In order to establish the Guignard's CQ it is sufficient to prove that
$T_{\mathcal{W}}(W_G,W_H)^\circ\subset 
L_{\mathcal{W}}(W_G,W_H)^\circ.
$

We distinguish two cases.

\noindent Case 1: $(W_G,W_H)\neq (0,0)$.
Define $\varphi(W_G,W_H):=\langle W_G,W_H\rangle$. Then
\(
\nabla \varphi(W_G,W_H) = (W_H,W_G) \neq 0.
\)
Since $\mathbb S_+^m$, $\mathbb S_-^m$ and the level set
\(
C_\varphi := \{(W_G,W_H)\mid \varphi(W_G,W_H)\ge 0\}
\)
are closed and Clarke regular, the tangent cone to $\mathcal W$ at $(W_G,W_H)$ can be written as the intersection of the corresponding tangent cones, namely,
\(
T_{\mathcal W}(W_G,W_H)
=
\big(T_{\mathbb S_+^m}(W_G)\times T_{\mathbb S_-^m}(W_H)\big)\cap T_{C_\varphi}(W_G,W_H).
\)
Moreover, by  Lemma A.4, the tangent cone to $C_\varphi$ is given by
\[
T_{C_\varphi}(W_G,W_H)
=
\begin{cases}
\mathbb S^m \times \mathbb S^m, & \text{if } \langle W_G,W_H\rangle > 0,\\[0.2cm]
\{(\Delta_G,\Delta_H)\mid \langle \Delta_G,W_H\rangle + \langle W_G,\Delta_H\rangle \ge 0\}, 
& \text{if } \langle W_G,W_H\rangle = 0.
\end{cases}
\]
Therefore,
\small{\[
T_{\mathcal W}(W_G,W_H)
=
\Big\{(\Delta_G,\Delta_H)\,\Big|
\Delta_G \in T_{\mathbb S_+^m}(W_G),\,
\Delta_H \in T_{\mathbb S_-^m}(W_H),\,
\langle \Delta_G,W_H\rangle + \langle W_G,\Delta_H\rangle \ge 0
\Big\}
\]}\normalsize
whenever $\langle W_G,W_H\rangle=0$. Comparing this characterization with the expression of the linearized cone obtained in part (i), we conclude that
\(
L_{\mathcal{W}}(W_G,W_H)=T_{\mathcal{W}}(W_G,W_H).
\)

\noindent Case 2: $(W_G,W_H)=(0,0)$.
In this case,  the linearized cone reduces to
\(
L_{\mathcal{W}}(0,0)=\mathbb S^m_+ \times \mathbb S^m_- .
\) We now compute the tangent cone $T_{\mathcal W}(0,0)$.
We recall the definition of the tangent cone to a closed set $\mathcal W \subset \mathbb S^m \times \mathbb S^m$ at a point $(W_G,W_H)\in\mathcal W$:
\[
T_{\mathcal W}(W_G,W_H)
:=
\left\{
(\Delta_G,\Delta_H)\;\middle|\;
\exists\, t_k \downarrow 0 \text{ such that } (W_G,W_H)+t_k(\Delta_G,\Delta_H)\in \mathcal W \ \forall k
\right\}.
\]
Let $(\Delta_G,\Delta_H)\in T_{\mathcal W}(0,0)$. Then there exists $t_k \downarrow 0$ such that $(t_k\Delta_G,t_k\Delta_H)\in \mathcal W$ for all $k$. Hence, $t_k\Delta_G\succeq0$, $t_k\Delta_H\preceq0$, and $\langle t_k\Delta_G,t_k\Delta_H\rangle\ge0$, which implies
\[
\Delta_G\succeq0,\qquad \Delta_H\preceq0,\qquad \langle \Delta_G,\Delta_H\rangle\ge0.
\]

On the other hand, $\Delta_G \succeq 0$ and $\Delta_H \preceq 0$ yield
$\langle \Delta_G,\Delta_H\rangle \le 0$.
Therefore,
\(
\langle \Delta_G,\Delta_H\rangle = 0.
\)
Hence, the tangent cone admits the characterization
\[
T_{\mathcal W}(0,0)
=
\{(\Delta_G,\Delta_H)\mid 
\Delta_G \succeq 0,\ \Delta_H \preceq 0,\ \langle \Delta_G,\Delta_H\rangle = 0\}.
\]
We now compute the polar cones of $L_{\mathcal W}(0,0)$ and $T_{\mathcal W}(0,0)$.
Recall that, for a cone $C\subset \mathbb S^m\times \mathbb S^m$, its polar cone is
\[
C^\circ := \{(A,B)\in \mathbb S^m\times \mathbb S^m \mid 
\langle A,\Delta_G\rangle + \langle B,\Delta_H\rangle \le 0,\ \forall (\Delta_G,\Delta_H)\in C\}.
\]
Since $L_{\mathcal W}(0,0)=\mathbb S_+^m\times \mathbb S_-^m$ is a Cartesian product of closed convex cones,
\(
L_{\mathcal W}(0,0)^\circ 
= (\mathbb S_+^m)^\circ \times (\mathbb S_-^m)^\circ.
\)
Using $(\mathbb S_+^m)^\circ=\mathbb S_-^m$ and $(\mathbb S_-^m)^\circ=\mathbb S_+^m$, we obtain
\(
L_{\mathcal W}(0,0)^\circ = \mathbb S_-^m \times \mathbb S_+^m.
\)

Let $(A,B)\in T_{\mathcal W}(0,0)^\circ$.  
Choosing $(\Delta_G,\Delta_H)=(\Delta_G,0)$ with arbitrary $\Delta_G\succeq0$ (which belongs to $T_{\mathcal W}(0,0)$),
we obtain
\[
\langle A,\Delta_G\rangle \le 0, \quad \forall\, \Delta_G\succeq0,
\]
hence, $A\in \mathbb S_-^m$.
Similarly, choosing $(\Delta_G,\Delta_H)=(0,\Delta_H)$ with arbitrary $\Delta_H\preceq0$ yields
\[
\langle B,\Delta_H\rangle \le 0, \quad \forall\, \Delta_H\preceq0,
\]
hence, $B\in \mathbb S_+^m$.
Therefore,
\(
T_{\mathcal W}(0,0)^\circ \subset \mathbb S_-^m\times \mathbb S_+^m.
\)

Conversely, let $(A,B)\in \mathbb S_-^m\times \mathbb S_+^m$ and $(\Delta_G,\Delta_H)\in T_{\mathcal W}(0,0)$. Then $\Delta_G\succeq0$, $\Delta_H\preceq0$, $A\preceq0$, and $B\succeq0$, so that $
\langle A,\Delta_G\rangle \le 0$, $\langle B,\Delta_H\rangle \le 0$,
and hence, $\langle A,\Delta_G\rangle+\langle B,\Delta_H\rangle \le 0$. This shows $(A,B)\in T_{\mathcal W}(0,0)^\circ$, and therefore
$
\mathbb S_-^m\times \mathbb S_+^m \subset T_{\mathcal W}(0,0)^\circ.
$
Combining both inclusions,
\(
T_{\mathcal W}(0,0)^\circ = \mathbb S_-^m\times \mathbb S_+^m = L_{\mathcal W}(0,0)^\circ.
\)

\noindent(iii) Closedness of $\mathcal H(W_G,W_H)$.
Recall that
\[
\mathcal H(W_G,W_H)=D\Phi(W_G,W_H)^*\,\mathcal N_{\mathcal K}(\Phi(W_G,W_H)),
\]
where $\Phi(W_G,W_H)=(W_G,W_H,\langle W_G,W_H\rangle)$ and
$\mathcal K=\mathbb S^m_+\times\mathbb S^m_-\times\mathbb R_+$.
Since $\mathcal K$ is a product cone, we have for $(U,V,\sigma)\in\mathcal K$,
\[
\mathcal N_{\mathcal K}(U,V,\sigma)
=
\mathcal N_{\mathbb S^m_+}(U)\times \mathcal N_{\mathbb S^m_-}(V)\times \mathcal N_{\mathbb R_+}(\sigma).
\]
Moreover, a direct computation shows that for $(\Xi_G,\Xi_H,\tau)\in\mathbb S^m\times\mathbb S^m\times\mathbb R$,
\[
D\Phi(W_G,W_H)^*(\Xi_G,\Xi_H,\tau)=(\Xi_G+\tau W_H,\ \Xi_H+\tau W_G).
\]
Hence, letting $\sigma:=\langle W_G,W_H\rangle$, we can write
\begin{equation}\label{eq:H-def}
\begin{aligned}
\mathcal H(W_G,W_H)
=
&\Big\{(\Xi_G+\tau W_H,\ \Xi_H+\tau W_G)\ \Big|\, 
\Xi_G\in \mathcal N_{\mathbb S^m_+}(W_G),\, \\
&\Xi_H\in \mathcal N_{\mathbb S^m_-}(W_H),\, 
\tau\in \mathcal N_{\mathbb R_+}(\sigma)\Big\}.
\end{aligned}
\end{equation}
We now prove that $\mathcal H(W_G,W_H)$ is closed. 

Suppose that $\sigma=0$. Then $\mathcal N_{\mathbb R_+}(0)=\mathbb R_-$ and~\eqref{eq:H-def} give
\begin{equation}\label{eq:H-sum}
\mathcal H(W_G,W_H)
=
\big(\mathcal N_{\mathbb S^m_+}(W_G)\times \mathcal N_{\mathbb S^m_-}(W_H)\big)
+\mathbb R_-(W_H,W_G).
\end{equation}
Since $(W_G,W_H)\in\mathcal W$, we have $W_G\succeq0$, $W_H\preceq0$, and $\langle W_G,W_H\rangle=0$. Hence, $W_G$ and $W_H$ are simultaneously diagonalizable: there exists $U\in\mathcal O_m$ such that
\[
W_G = U\,\mathrm{diag}(\lambda(W_G))\,U^\top,\qquad
W_H = U\,\mathrm{diag}(\lambda(W_H))\,U^\top,
\]
with $\lambda_i(W_G)\ge 0$, $\lambda_i(W_H)\le 0$, and $\lambda_i(W_G)\lambda_i(W_H)=0$ for all $i=1,\dots,m$.

Let $\mathcal Q:\mathbb E\to\mathbb E$ be the linear isometry
$\mathcal Q(A,B)=(U^\top A U,\ U^\top B U)$. Since $\mathcal Q$ is a linear isometry,
$\mathcal H(W_G,W_H)$ is closed if and only if $\mathcal Q(\mathcal H(W_G,W_H))$ is closed.
Applying $\mathcal Q$ to~\eqref{eq:H-sum} and using the invariance of the normal cone of the semidefinite cones under orthogonal congruence, we obtain
\begin{equation*}
\begin{aligned}
\mathcal Q(\mathcal H(W_G,W_H)) =
\ &\Big(\mathcal N_{\mathbb S^m_+}(\operatorname{diag}(\lambda(W_G)))\times 
\mathcal N_{\mathbb S^m_-}(\operatorname{diag}(\lambda(W_H)))\Big)\\ 
&+\ \mathbb R_-(\operatorname{diag}(\lambda(W_H)),\operatorname{diag}(\lambda(W_G))).
\end{aligned}
\end{equation*}

In these spectral coordinates, the above set decomposes as a finite Cartesian product over $i=1,\dots,m$ of
two-dimensional sets, namely
\[
\mathcal Q(\mathcal H(W_G,W_H))
=
\prod_{i=1}^m \mathcal H_i(\lambda_i(W_G),\lambda_i(W_H)),
\]
where
\small{\[
\mathcal H_i(\lambda_i(W_G),\lambda_i(W_H))
:=
\left\{
(\lambda_G+\tau \lambda_i(W_H),\ \lambda_H+\tau \lambda_i(W_G))
\ \middle|\ 
\begin{aligned}
&\lambda_G\in \mathcal N_{\mathbb R_+}(\lambda_i(W_G)),\\
&\lambda_H\in \mathcal N_{\mathbb R_-}(\lambda_i(W_H)),\\
&\tau\le 0
\end{aligned}
\right\}.
\]}
\normalsize

Clearly, $\mathcal H_i(\lambda_i(W_G),\,\lambda_i(W_H))\subset\mathbb R^2$.
Since we have $\lambda_i(W_G)\ge 0$, $\lambda_i(W_H)\le 0$, and $\lambda_i(W_G)\lambda_i(W_H)=0$, only the following cases may occur:

\noindent\emph{Case A: $\lambda_i(W_G)>0$, $\lambda_i(W_H)=0$.}
Then $\mathcal N_{\mathbb R_+}(\lambda_i(W_G))=\{0\}$ and $\mathcal N_{\mathbb R_-}(0)=\mathbb R_+$, hence
\[
\mathcal H_i(\lambda_i(W_G),0)=\{(0,\lambda_H+\tau \lambda_i(W_G))\mid \lambda_H\ge0,\ \tau\le0\}
=\{(0,t)\mid t\in\mathbb R\},
\]
which is closed.

\noindent\emph{Case B: $\lambda_i(W_G)=0$, $\lambda_i(W_H)<0$.}
Then $\mathcal N_{\mathbb R_+}(0)=\mathbb R_-$ and $\mathcal N_{\mathbb R_-}(\lambda_i(W_H))=\{0\}$, hence
\[
\mathcal H_i(0,\lambda_i(W_H))=\{(\lambda_G+\tau \lambda_i(W_H),0)\mid \lambda_G\le0,\ \tau\le0\}
=\{(t,0)\mid t\in\mathbb R\},
\]
which is closed.

\noindent\emph{Case C: $\lambda_i(W_G)=0$, $\lambda_i(W_H)=0$.}
Then $\mathcal N_{\mathbb R_+}(0)=\mathbb R_-$ and $\mathcal N_{\mathbb R_-}(0)=\mathbb R_+$, hence
\[
\mathcal H_i(0,0)=\{(\lambda_G,\lambda_H)\mid \lambda_G\le0,\ \lambda_H\ge0\}
=\mathbb R_-\times\mathbb R_+,
\]
which is closed.

Therefore, $\mathcal H_i(\lambda_i(W_G),\lambda_i(W_H))$ is closed for every $i$. Since a finite Cartesian product of closed sets is closed, as follows,
$\mathcal Q(\mathcal H(W_G,W_H))$ is closed, and consequently so is $\mathcal H(W_G,W_H)$.
Combining $T_{\mathcal{W}}=L_{\mathcal{W}}$ with the closedness of $\mathcal{H}(W_G,W_H)$ yields Guignard’s Constraint Qualification at $(W_G,W_H)$.
\end{proof}

%%=============================================%%
%% For submissions to Nature Portfolio Journals %%
%% please use the heading ``Extended Data''.   %%
%%=============================================%%

%%=============================================================%%
%% Sample for another appendix section			       %%
%%=============================================================%%

%% \section{Example of another appendix section}\label{secA2}%
%% Appendices may be used for helpful, supporting or essential material that would otherwise 
%% clutter, break up or be distracting to the text. Appendices can consist of sections, figures, 
%% tables and equations etc.

\end{appendices}

%%===========================================================================================%%
%% If you are submitting to one of the Nature Portfolio journals, using the eJP submission   %%
%% system, please include the references within the manuscript file itself. You may do this  %%
%% by copying the reference list from your .bbl file, paste it into the main manuscript .tex %%
%% file, and delete the associated \verb+\bibliography+ commands.                            %%
%%==================================================================================

%\bibliography{sn-bibliography}% common bib file
%% if required, the content of .bbl file can be included here once bbl is generated
%%\input sn-article.bbl

\bibliographystyle{plainnat}
\bibliography{references}
\end{document}